\newtheorem{theorem}{Theorem}[section]
\newtheorem{corollary}[theorem]{Corollary}
\newtheorem{definition}[theorem]{Definition}
\newtheorem{lemma}[theorem]{Lemma}
\newtheorem{proposition}[theorem]{Proposition}
\theoremstyle{remark}
\newtheorem{remark}[theorem]{Remark}
\DeclareMathOperator{\supp}{supp}
\DeclareMathOperator{\id}{Id}
\newcommand{\Gcal}{\mathcal{G}}
\newcommand{\Hcal}{\mathcal{H}}
\newcommand{\Pcal}{\mathcal{P}}
\newcommand{\Vcal}{{V}}
\newcommand{\Wcal}{\mathcal{W}}
\newcommand{\E}{\mathbb{E}}
\newcommand{\N}{\mathbb{N}}
\newcommand{\PP}{\mathbb{P}}
\newcommand{\R}{\mathbb{R}}
\renewcommand{\epsilon}{\varepsilon}
\newcommand{\Cpl}{\text{Cpl}}
\newcommand{\Law}{\mathscr L}
\newcommand{\MCov}{{\rm MCov}}
\newcommand{\bary}{{\rm bary}}
\newcommand{\Functional}{\mathcal V}
\DeclareMathOperator*{\esssup}{ess\,sup}
\DeclareMathOperator*{\essinf}{ess\,inf}
\title{The Gradient Flow of the Bass Functional in Martingale Optimal Transport}
\author{Julio Backhoff-Veraguas, Gudmund Pammer, Walter Schachermayer}
\begin{document}

\maketitle

\begin{abstract}

    {
    %\color{red} 
    Given $\mu$ and $\nu$, probability measures on $\R^d$ in convex order, a Bass martingale is arguably the most natural martingale starting with law $\mu$ and finishing with law $\nu$. Indeed, this martingale is obtained by \emph{stretching} a reference Brownian motion so as to meet the data $\mu,\nu$. Unless $\mu$ is a Dirac, the existence of a Bass martingale is a delicate subject, since for instance the reference Brownian motion must be allowed to have a non-trivial initial distribution $\alpha$, not known in advance. Thus the key to obtaining the Bass martingale, theoretically as well as practically, lies in finding $\alpha$.  

    In \cite{BaSchTsch23} it has been shown that $\alpha$ is determined as the minimizer of the so-called \emph{Bass functional}. In the present paper we propose to minimize this functional by following its gradient flow, or more precisely, the gradient flow of its $L^2$-lift. In our main result we show that this gradient flow converges in norm to a minimizer of the Bass functional, and when $d= 1$ we further establish that convergence is exponentially fast. 
    }
\end{abstract}

\section{Introduction}

\subsection{Martingale optimization problem}

Let $\mu, \nu$ be elements of $\mathcal P_{2}(\R^d)$, the space of probability measures on $\R^d$ with finite second moments. Assume that $\mu, \nu$ are in convex order, denoted by $\mu \le_{cx} \nu$, and meaning that $\int \phi \, d\mu \leqslant \int \phi \, d\nu$ holds for all convex functions $\phi \colon \R^d \rightarrow \R$. As in \cite{BaBeHuKa20,BBST23} we consider the martingale optimization problem 
\begin{equation*} \label{MBMBB} \tag{MBB}
MT(\mu, \nu) := 
\inf_{\substack{M_{0} \sim \mu, \, M_{1} \sim \nu, \\ M_{t} = M_{0} + \int_{0}^{t}  \sigma_{s} \, dB_{s}}} 
\mathbb{E}\Big[\int_{0}^{1} \vert \sigma_{t} - I_{d} \vert^{2}_{\textnormal{HS}} \, dt\Big],
\end{equation*} 
where $B$ is a $d$-dimensional Brownian motion and $\vert \cdot \vert_{\textnormal{HS}}$ denotes the Hilbert--Schmidt norm. The abbreviation ``MBB'' stands for ``Martingale Benamou--Brenier'' and this designation is motivated from the fact that \eqref{MBMBB} can be seen as a martingale counterpart of the classical formulation in optimal transport by Benamou--Brenier \cite{BeBr99}, see \cite{BaBeHuKa20,BBST23}. The problem \eqref{MBMBB} can be shown to be equivalent to 
\begin{equation} \label{MBMBB2}
P(\mu, \nu) := 
\sup_{\substack{M_0 \sim \mu, \, M_1 \sim \nu, \\ M_t = M_0 + \int_0^t \sigma_s \, dB_s}}
\mathbb{E}\Big[\int_0^1 \textnormal{tr} (\sigma_t) \, dt\Big].
\end{equation} 
Indeed their optimizer are identical and we have $MT(\mu,\nu)=-2P(\mu, \nu) +\int|x|^2d(\nu-\mu) +d $.

As shown in \cite{BaBeHuKa20}, the problem \eqref{MBMBB} admits a strong Markov martingale $\hat{M}$ as the unique optimizer, which is called the \textit{stretched Brownian motion} from $\mu$ to $\nu$ in \cite{BaBeHuKa20}.

\subsection{Bass martingales and structure of stretched Brownian motion}

Owing to the work \cite{BBST23} it is known that the optimality property of stretched Brownian motion is related to a structural / geometric description. For its formulation we start with the following definition.

\begin{definition} \label{defi:irreducible_intro} 
For probability measures $\mu, \nu$ we say that the pair $(\mu,\nu)$ is \textit{irreducible} if for all measurable sets $A, B \subseteq \R^d$ with $\mu(A), \nu(B)>0$ there is a martingale $X= (X_{t})_{0 \leqslant t \leqslant 1}$ with $X_{0} \sim \mu$, $X_{1} \sim \nu$ such that $\mathbb{P}(X_{0}\in A, X_{1}\in B) >0$. 
\end{definition} 

We remark that in the classical theory of optimal transport one can always find couplings $(X_{0},X_{1})$ of $(\mu,\nu)$ such that $\mathbb{P}(X_{0}\in A, X_{1}\in B) > 0$, for all measurable sets $A, B \subseteq \R^d$ with $\mu(A),\nu(B) > 0$; e.g., by letting $(X_{0},X_{1})$ be independent. In martingale optimal transport this property may fail.

\smallskip

Next we recall the following concept from \cite{Ba83, BaBeHuKa20, BBST23}.

\begin{definition} \label{def:BassMarti_intro} Let $B = (B_{t})_{0 \leqslant t \leqslant 1}$ be a Brownian motion on $\R^d$ with $B_{0} \sim \hat{\alpha}$, where $\hat{\alpha}$ is an arbitrary element of $\mathcal P(\R^d)$, the space of probability measures on $\R^d$. Let $\hat{v} \colon \R^d \rightarrow \R$ be convex such that $\nabla \hat{v}(B_{1})$ is square-integrable. We call 
\begin{equation} \label{def:BassMarti_intro_eq} 
\hat{M}_{t} := 
\E[\nabla \hat{v}(B_{1}) \, \vert \, \sigma(B_{s} \colon s \leqslant t)]
= \E[\nabla \hat{v}(B_{1}) \, \vert \, B_{t}], \qquad 0 \leqslant t \leqslant 1
\end{equation}
a \textit{Bass martingale} with \textit{Bass measure} $\hat{\alpha}$ joining $\mu = \Law(\hat{M}_{0})$ with $\nu = \Law(\hat{M}_{1})$.
\end{definition}

The reason behind this terminology is that Bass \cite{Ba83} used this construction (with $d=1$ and $\hat{\alpha}$ a Dirac measure) in order to derive a solution of the Skorokhod embedding problem.

\smallskip

In \cite[Theorem 1.3]{BBST23} it is shown that under the irreducibility assumption on the pair $(\mu,\nu)$ there is a unique Bass martingale $\hat{M}$ from $\mu$ to $\nu$, i.e., satisfying $\hat{M}_{0} \sim \mu$ and $\hat{M}_{1} \sim \nu$:

\begin{theorem} \label{MainTheorem} Let $\mu, \nu \in \mathcal P_{2}(\R^d)$ with $\mu \le_{cx} \nu$ and assume that $(\mu,\nu)$ is irreducible. Then the following are equivalent for a martingale $\hat{M}=(\hat{M}_{t})_{0 \leqslant t \leqslant 1}$ with $\hat{M}_{0} \sim \mu$ and $\hat{M}_{1} \sim \nu$:
\begin{enumerate}[label=(\arabic*)] 
\item \label{MainTheorem_1} $\hat{M}$ is a stretched Brownian motion, i.e., the optimizer of \eqref{MBMBB}.
\item \label{MainTheorem_2} $\hat{M}$ is a Bass martingale.  
\end{enumerate}
\end{theorem}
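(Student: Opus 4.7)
My plan is to prove the two implications separately, with the first relying on a direct variational computation and the second on extracting an Euler--Lagrange condition that must then be globalised via irreducibility.

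For \ref{MainTheorem_2}$\Rightarrow$\ref{MainTheorem_1}: given a Bass martingale $\hat M$ built from convex $\hat v$ and Bass measure $\hat\alpha$, set $v_t(x):=\E[\hat v(x+(B_1-B_t))]$. The heat semigroup preserves convexity, so $v_t$ is convex; moreover it satisfies the backward heat equation $\partial_t v_t + \tfrac12 \Delta v_t=0$ and $v_1=\hat v$. Itô's formula then yields $\hat M_t=\nabla v_t(B_t)$ and volatility $\hat\sigma_t=\nabla^2 v_t(B_t)\succeq 0$. By \eqref{MBMBB2}, optimality of $\hat M$ reduces to maximising $\E[\int_0^1 \operatorname{tr}(\sigma_t)\,dt]$, which via the identity $\E[\int_0^1 \operatorname{tr}(\sigma_t)\,dt]=\E[M_1\cdot B_1-M_0\cdot B_0]$ can be compared across competitors using $B_0\sim \hat\alpha$ and $B_1\sim \hat\alpha\ast\gamma$ (with $\gamma$ the standard Gaussian) as reference. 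The key comparison is a Brenier-type inequality $\E[M_1\cdot B_1]\le \MCov(\hat\alpha\ast\gamma,\nu)$ for couplings with $M_1\sim\nu$, with equality for $M_1=\nabla \hat v(B_1)$; matched with a suitable Jensen bound controlling $\E[M_0\cdot B_0]$ from below, this identifies the Bass martingale as an attainer of the MBB infimum.

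For \ref{MainTheorem_1}$\Rightarrow$\ref{MainTheorem_2}: starting from an optimizer $\hat M$, I would derive its first-order optimality condition by local perturbations. Replacing $\hat M$ on a short interval $[t,t+\delta]$ by an admissible alternative martingale with the same endpoint conditional laws, and letting $\delta\to 0$, forces $\hat\sigma_t$ to be, $dt\otimes d\mathbb P$-a.s., the Hessian at $\hat M_t$ of a convex potential $v_t$ that evolves along the backward heat equation. This is the infinitesimal Bass structure. A priori the potentials $v_t$ could differ across distinct irreducible components of $(\mu,\nu)$; the irreducibility hypothesis ensures there is only one such component, yielding a single global convex $\hat v:=v_1$ on $\R^d$. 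Letting $\hat\alpha$ be the law at time $0$ of the reference Brownian motion implicit in the construction, one verifies $\hat M_t=\E[\nabla \hat v(B_1)\mid B_t]$.

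The principal obstacle is the globalisation step in the second direction. The Euler--Lagrange condition furnishes only a local germ of a convex potential at each point, and assembling these germs into a single convex function on $\R^d$ is precisely what irreducibility provides; carefully handling the requisite measurability, integrability, and uniqueness of the Bass measure $\hat\alpha$ is where the bulk of the technical work lies. In contrast, the first direction is a comparatively clean duality computation, once the convexity of the heat flow and Brenier's theorem are in place.
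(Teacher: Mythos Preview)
The paper does not prove this theorem at all: it is quoted verbatim as \cite[Theorem~1.3]{BBST23} and used as background. So there is no ``paper's own proof'' to compare against; the result belongs to \cite{BBST23}, and the present article only invokes it.

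That said, your sketch for \ref{MainTheorem_2}$\Rightarrow$\ref{MainTheorem_1} has a genuine gap. The identity $\E\big[\int_0^1\operatorname{tr}(\sigma_t)\,dt\big]=\E[M_1\cdot B_1-M_0\cdot B_0]$ is fine, and for the Bass martingale it evaluates to $\MCov(\hat\alpha\ast\gamma_1,\nu)-\MCov(\hat\alpha,\mu)=\Functional(\hat\alpha)$. But for a generic competitor the two natural bounds point in \emph{opposite} directions: you get $\E[M_1\cdot B_1]\le \MCov(\hat\alpha\ast\gamma_1,\nu)$, while the only available bound on the other term is $\E[M_0\cdot B_0]\le \MCov(\hat\alpha,\mu)$, i.e.\ $-\E[M_0\cdot B_0]\ge -\MCov(\hat\alpha,\mu)$. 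There is no ``Jensen bound controlling $\E[M_0\cdot B_0]$ from below'' in the direction you need; indeed one may couple $M_0$ and $B_0$ badly (or take $B_0=0$ and face the same issue with $\E[M_1\cdot B_1]\le \MCov(\gamma_1,\nu)$, which need not be dominated by $\Functional(\hat\alpha)$). The argument in \cite{BBST23} does not proceed this way: it relies on a verification/duality inequality built from the Legendre transforms $(\hat v\ast\gamma_{1-t})^\ast$ applied to the \emph{competitor} $M_t$, which produces a one-sided pathwise bound with equality precisely for the Bass martingale.

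Your outline for \ref{MainTheorem_1}$\Rightarrow$\ref{MainTheorem_2} correctly identifies that irreducibility is what lets a locally obtained convex potential be globalised, and that this is the delicate step. As written, however, it is a description of the programme rather than an argument: the local perturbation step does not by itself produce a potential $v_t$ solving the backward heat equation, and the passage from a first-order condition to the existence of a global $\hat v$ with $(\nabla\hat v)_\#(\hat\alpha\ast\gamma_1)=\nu$ and $(\nabla\hat v\ast\gamma_1)_\#\hat\alpha=\mu$ is precisely the substance of \cite{BBST23}.
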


Since, for probability measures $\mu, \nu \in \mathcal P_{2}(\R^d)$ with $\mu \le_{cx} \nu$, a stretched Brownian motion always exists by \cite[Theorem 1.5]{BaBeHuKa20}, the above theorem states that the existence of a Bass martingale follows from --- and is in fact equivalent to --- the irreducibility assumption on the pair $(\mu,\nu)$. 

\smallskip

Denoting by $\ast$ the convolution operator\footnote{By a minor abuse of notation we write $f \ast \gamma_1$ for the convolution of a measurable map $f \colon \R^d \to \R^d$ with the density of the Gaussian $\gamma_1$ and similarly $\alpha \ast \gamma_1$ for the convolution of a measure $\alpha \in \Pcal(\R^d)$ with the Gaussian measure $\gamma_1$.} and by $\gamma_1$ the standard Gaussian measure on $\R^d$, we remark that the convex function $\hat{v}$ and the Bass measure $\hat{\alpha}$ from Definition \ref{def:BassMarti_intro} satisfy the identities (see \cite{BBST23})
\begin{equation} 
\label{eq_def_id_bm}
(\nabla \hat{v} \ast \gamma_1)(\hat{\alpha}) = \mu
\qquad \textnormal{ and } \qquad 
\nabla \hat{v}(\hat{\alpha} \ast \gamma_1) = \nu.
\end{equation}
In terms of \eqref{def:BassMarti_intro_eq} this amounts to
\[
\hat{M}_{t} = \nabla \hat{v}\ast \gamma_{1-t}(B_{t}), \qquad 0 \leqslant t \leqslant 1,
\]
where $\gamma_s$ denotes the centred Gaussian with covariance matrix $sI_{d}$.

\subsection{The Bass functional} In the following we denote by $\MCov$ the \textit{maximal covariance}  between two probability measures $p_{1},p_{2} \in \mathcal P_{2}(\R^d)$, defined as
\begin{equation} \label{eq_def_mcov}
\MCov(p_{1},p_{2}) := \sup_{q \in \Cpl(p_{1},p_{2})} \int \langle x_{1},x_{2} \rangle \, q(dx_{1},dx_{2}),
\end{equation}
where $\Cpl(\mu,\nu)$ is the set of all couplings $\pi \in \mathcal P(\R^d \times \R^d)$ between $\mu$ and $\nu$, i.e., probability measures on $\R^d \times \R^d$ with first marginal $\mu$ and second marginal $\nu$. As is well known, maximizing the covariance between $p_{1}$ and $p_{2}$ is equivalent to minimizing their expected squared Wasserstein distance. We follow \cite{BaSchTsch23} in defining:

\begin{definition} The \textit{Bass functional} is given by
\begin{equation} \label{def.bass.func}
\mathcal P_{2}(\R^{d}) \ni \alpha \longmapsto \Functional(\alpha) 
:= \MCov(\alpha \ast \gamma_1, \nu) - \MCov(\alpha,\mu).
\end{equation}
\end{definition}
If $\mu \le_{c} \nu$ we have $\Functional(\alpha) \ge 0$ for every $\alpha \in \Pcal_2(\R^d)$.

The main result of \cite{BaSchTsch23} was the reformulation of Problem \eqref{MBMBB2}, which characterizes the Bass measure $\hat{\alpha}$ in \eqref{eq_def_id_bm} as the optimizer of the Bass functional \eqref{def.bass.func}. This is the content of \cite[Theorem 1.5]{BaSchTsch23}:

\begin{theorem} \label{prop_alpha_vari_mc} Let $\mu, \nu \in \mathcal P_{2}(\R^d)$ with $\mu \le_{cx} \nu$. Then
\begin{equation} \label{eq:variational_alpha_mc}
P(\mu,\nu) 
= \inf_{\alpha \in \mathcal P_{2}(\R^{d})} \Functional(\alpha).
\end{equation}
The right-hand side of \eqref{eq:variational_alpha_mc} is attained by $\hat{\alpha} \in \mathcal P_{2}(\R^{d})$ if and only if there is a Bass martingale from $\mu$ to $\nu$ with Bass measure $\hat{\alpha} \in \mathcal P_{2}(\R^{d})$.
\end{theorem}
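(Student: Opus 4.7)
The plan is to establish \eqref{eq:variational_alpha_mc} in two stages: a weak-duality argument giving $P(\mu,\nu)\leq \inf_\alpha \Functional(\alpha)$, and a sharpness argument showing that any Bass martingale saturates this bound, thereby yielding both the identity and one direction of the attainer characterization. The converse direction then follows from a first-order analysis of $\Functional$ at its minimizers.

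\textbf{Weak duality.} Fix any $\alpha\in \Pcal_2(\R^d)$ and any competitor $M_t = M_0 + \int_0^t \sigma_s\, dB_s$ for \eqref{MBMBB2}, with $B$ a standard Brownian motion (so $B_0=0$). Enlarge the probability space to include a random variable $Y\sim\alpha$, independent of $B$, and coupled to $M_0$ via an optimal coupling attaining $\MCov(\mu,\alpha)$. Setting $\tilde B_t := Y + B_t$, so that $\tilde B_1\sim \alpha\ast\gamma_1$, Itô's product rule yields
\[
\E\!\left[\int_0^1 \operatorname{tr}(\sigma_t)\,dt\right] = \E[\langle M_1,\tilde B_1\rangle] - \E[\langle M_0,Y\rangle] \leq \MCov(\nu,\alpha\ast\gamma_1) - \MCov(\mu,\alpha) = \Functional(\alpha),
\]
using that $Y\in\Fcal_0$ makes $\E[\langle M_1,Y\rangle]=\E[\langle M_0,Y\rangle]$, the $\MCov$-optimality of the coupling $(M_0,Y)$, and the bound $\E[\langle M_1,\tilde B_1\rangle]\leq \MCov(\nu,\alpha\ast\gamma_1)$. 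Taking $\sup_M$ and then $\inf_\alpha$ delivers $P(\mu,\nu)\leq \inf_\alpha \Functional(\alpha)$.

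\textbf{Sharpness via Bass martingales.} If $\hat M$ is a Bass martingale with measure $\hat\alpha$ joining $\mu$ to $\nu$, then writing $\hat M_t = \nabla\hat v\ast\gamma_{1-t}(\tilde B_t)$ with $\tilde B_0\sim\hat\alpha$ (Definition~\ref{def:BassMarti_intro}) makes $\hat M$ admissible for \eqref{MBMBB2}. Repeating the previous computation with $(M,\alpha) = (\hat M,\hat\alpha)$, both inequalities become equalities: convexity of $\hat v$ (respectively of $\hat v\ast\gamma_1$) makes $(\hat M_1,\tilde B_1) = (\nabla\hat v(\tilde B_1),\tilde B_1)$ (respectively $(\hat M_0,\tilde B_0) = (\nabla(\hat v\ast\gamma_1)(\tilde B_0),\tilde B_0)$) an optimal coupling for the corresponding $\MCov$ by Knott--Smith. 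Hence $\E[\int_0^1 \operatorname{tr}(\sigma^{\hat M}_t)\,dt] = \Functional(\hat\alpha)$, and admissibility forces $P(\mu,\nu)\geq \Functional(\hat\alpha)\geq \inf\Functional\geq P(\mu,\nu)$, giving equality throughout, the identity \eqref{eq:variational_alpha_mc}, and the ``if'' direction of the characterization.

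\textbf{Converse direction and the unconditional equality.} What remains — and what I expect to be the main obstacle — is to establish (a) the identity $P(\mu,\nu) = \inf \Functional$ even when no Bass martingale exists, and (b) that every minimizer $\hat\alpha$ arises as a Bass measure. For (b) the natural strategy is a first-order variational computation: letting $v_{\hat\alpha}$ be the Brenier potential for $\MCov(\hat\alpha\ast\gamma_1,\nu)$ and perturbing $\hat\alpha$ by pushforwards along smooth vector fields, the envelope theorem for $\MCov$ (whose Wasserstein gradient in the first argument is $\mathrm{id}$ minus the Brenier map into the second marginal) should collapse the stationarity condition to $\nabla(v_{\hat\alpha}\ast\gamma_1)_{\#}\hat\alpha = \mu$, which is exactly the Bass relation \eqref{eq_def_id_bm}. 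The delicate points are the regularity required to differentiate both $\MCov$-terms in $\alpha$ and, for (a) when no minimizer exists, extracting a subsequential limit from an infimizing sequence — which demands tightness and matching semicontinuity of the two $\MCov$-terms, for which the smoothing effect of the Gaussian convolution plays a crucial role.
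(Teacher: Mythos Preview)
The paper does not prove this theorem; it is quoted verbatim as \cite[Theorem~1.5]{BaSchTsch23} and used as input for the rest of the article. So there is no ``paper's own proof'' to compare against.

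On the merits of your sketch: the weak-duality and sharpness steps are correct and standard. You correctly flag the converse direction and the unconditional equality as the real content. Your plan for (b) --- derive the Bass relation \eqref{eq_def_id_bm} from the first-order stationarity of $\Functional$ via the Wasserstein envelope theorem --- is the right intuition but hides a genuine obstacle that this very paper points out (Remark after Lemma~\ref{lem:Vcal}): $\MCov(\cdot,\mu)$ need \emph{not} be $\Wcal_2$-differentiable, because there may be several optimal couplings from $\hat\alpha$ to $\mu$. So the ``envelope theorem for $\MCov$'' you invoke for the $\mu$-term does not apply directly, and the stationarity condition does not collapse cleanly to $(\nabla \hat v\ast\gamma_1)_\#\hat\alpha=\mu$ without further work (one must argue at the level of sub/super-differentials, or pass to the $L^2$-lift as the present paper does in Lemma~\ref{lem:Vcal}, which fixes the coupling and restores Fr\'echet differentiability). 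Likewise, for (a) your proposed compactness/semicontinuity route is plausible but not obviously sufficient: the two $\MCov$-terms have opposite signs, so lower semicontinuity of $\Functional$ along an infimizing sequence is not automatic, and tightness can genuinely fail even under fairly strong assumptions (cf.\ the discussion around Assumption~(A) and \cite[Example~6.7]{BBST23}). The actual proof in \cite{BaSchTsch23} proceeds by a duality argument rather than by the direct method you outline.
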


\subsection{Main Results}

Per Theorem \ref{prop_alpha_vari_mc}, it is desirable to find the minimizer of the Bass functional. Indeed, doing so will provide us with the Bass measure and hence with the Bass martingale between $\mu$ and $\nu$. Moreover, one would like to have a \emph{fast} minimization method for the Bass functional. One idea would be to perform the gradient descent of the Bass functional in the sense of Wasserstein spaces. This seems challenging to the authors, since not much is known about the convexity of the Bass functional in the Wasserstein geometry for $d$ arbitrary (see however \cite[Theorem 1.7]{BaSchTsch23}). For this reason we take a detour. %which proves fairly useful.

We fix a probability space supporting independent random variables $X\sim \mu$ and $\Gamma\sim \gamma_1$, which we fix throughout. We then define the sigma-algebras $\Gcal := \sigma(X)$ and $\Hcal:= \sigma(X,\Gamma)$, with their associated $L^2$-spaces of $\R^d$-valued random variables, respectively $L^2(\Hcal;\R^d)$ and $L^2(\Gcal;\R^d)$.
\begin{definition}
The lifted Bass functional is given by
   \begin{equation}
    \label{eq:def.Bass.Hilbert.functional.intro}
    L^2(\Gcal;\R^d)\ni Z \mapsto \Vcal(Z) := \sup_{ \substack{ Y \in L^2(\Hcal;\R^d), \\ Y \sim \nu } }\mathbb E[ (Z + \Gamma) \cdot Y - Z \cdot X].
\end{equation} 
\end{definition}
As we show in Lemma \ref{lem:Vcal} below, minimizing $\Functional$ or $\Vcal$ lead to the same problem, and in fact if $Z^*$ minimizes $\Vcal$ then $\Law(Z^*)$ is a Bass measure. Furthermore, we shall show that $\Vcal$ is convex in the conventional sense, Fr\'echet differentiable,  and admits the explicit $L^2$-gradient, $$D_Z \Vcal = \nabla v \ast \gamma_1(Z) - X,$$   where $\nabla v$ is the Brenier map\footnote{The Brenier map from an absolutely continuous measure $\rho$, to $\eta$, is the unique almost-sure gradient of a convex function $v$ satisfying the push-forward constraint $(\nabla v) (\rho) = \eta$, while $v$ is called the Brenier potential.} from $\Law(Z)\ast\gamma_1$ to $\nu$. We can hence define the $L^2$-Bass gradient flow as follows:

\begin{definition}
    An absolutely continuous curve $\{Z_t\}_{t\in\R_+}\subset L^2(\Gcal;\R^d)$ is called an $L^2$-Bass gradient flow (started at $Z_0$) if the curve satisfies for a.e.\ $t\in\R_+$ the equality
    \[ \frac{d Z_t}{dt}= - [\nabla v_t \ast \gamma_1(Z) - X], \]
    where $\nabla v_t$ is the Brenier map from $\Law(Z_t)\ast\gamma_1$ to $\nu$.
\end{definition}

Invoking the classical theory of gradient flows of convex functionals in Hilbert spaces, one could hope to obtain the existence and uniqueness of the Bass gradient flow, as well as the weak-$L^2$ convergence of the flow to a minimizer of $\Vcal$. However the story is not that simple, since there is in general no reason for $\Vcal$ to attain its minimum at a square integrable random variable. Moreover weak-$L^2$ convergence of the Bass flow is not strong enough to derive convergence of $\Law(Z_t)$ in a meaningful sense, and the abstract theory would not give   us convergence rates. For these reasons we need to make further assumptions and argue in a self-contained fashion.
\medskip

Under technical assumptions, defined in Section \ref{sec:gradient flow} below,\footnote{In short: the pair $(\mu,\nu)$ is irreducible, the convex hull of the support of $\nu$, denoted by $C$, is compact, and the convex hull of the support of $\mu$ is compactly contained in the interior of $C$.} we obtain our first main result:

\begin{theorem}
\label{thm:first_main}
 Under Assumption $(A)$, for any r.v.\ $Z_0\in L^2(\Gcal;\R^d)$, the $L^2$-Bass gradient flow $(Z_t)_{t \ge 0}$ exists, is unique, and it converges in $L^2$-norm to $Z^\star\in L^\infty(\Gcal;\R^d)$, where the latter is the unique minimizer of $\Vcal$ with $\text{bary}(Z^\star)=\text{bary}(Z_0)$.
 Further, if $Z_0$ is bounded, then the entire curve $(Z_t)_{t \ge 0}$ is uniformly bounded.
\end{theorem}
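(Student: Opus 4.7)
The plan is to apply the Brezis--Komura theory of gradient flows of convex functionals on Hilbert spaces and combine it with an a priori $L^\infty$-bound drawn from the compact supports in Assumption $(A)$. Since $\Vcal$ is proper, convex, $L^2$-lower semicontinuous and Fr\'echet differentiable (as noted just before the theorem), Brezis--Komura yields the unique absolutely continuous $L^2$-solution $(Z_t)_{t\ge 0}$, together with the contractivity $\|Z^{(1)}_t-Z^{(2)}_t\|_{L^2}\le\|Z^{(1)}_0-Z^{(2)}_0\|_{L^2}$ and the monotone decrease $\Vcal(Z_t)\searrow\inf\Vcal$. A short computation shows that the flow preserves barycentres: since $\nabla v_t$ pushes $\Law(Z_t)\ast\gamma_1$ forward to $\nu$ and $\Gamma$ is independent of $Z_t$,
\[
\E\bigl[\nabla v_t\ast\gamma_1(Z_t)\bigr] \;=\; \E\bigl[\nabla v_t(Z_t+\Gamma)\bigr] \;=\; \bary(\nu) \;=\; \bary(\mu) \;=\; \E X,
\]
using $\mu\le_{cx}\nu$ for the middle equality; hence $\E[D_Z\Vcal(Z_t)]=0$ and $\bary(Z_t)=\bary(Z_0)$ for all $t\ge0$.

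The a priori $L^\infty$-estimate is the crux of the argument. Under $(A)$, the set $C=\mathrm{conv}(\supp\nu)$ is compact and $\supp\mu\subset\subset\mathrm{int}(C)$. The Brenier map $\nabla v_t$ takes values in $C$, and by convexity so does $\nabla v_t\ast\gamma_1$. Monotonicity of Brenier maps gives $\nabla v_t\ast\gamma_1(z)\cdot z/|z|\to h_\nu(z/|z|)$ as $|z|\to\infty$, uniformly in $t$; combined with the strict containment $\supp\mu\subset\subset\mathrm{int}(C)$, there exist $\delta,R_0>0$ such that
\[
z\cdot\bigl(X-\nabla v_t\ast\gamma_1(z)\bigr)\;\le\;-\delta |z| \quad\text{a.s.\ whenever } |z|\ge R_0.
\]
Since $\tfrac{d}{dt}|Z_t|^2 = 2Z_t\cdot(X-\nabla v_t\ast\gamma_1(Z_t))$, a pathwise comparison then confines $|Z_t|$ to $\max\{R_0,\|Z_0\|_{L^\infty}\}$ for all $t$, yielding the uniform-in-$t$ bound from bounded initial data.

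For convergence, Theorems~\ref{MainTheorem} and~\ref{prop_alpha_vari_mc} together with $(A)$ produce a minimizer of $\Vcal$; since minimizers are Brenier maps from $X$ to translates of one fixed Bass measure, the barycentre constraint singles out a unique $Z^\star\in L^\infty$. Contractivity makes $\|Z_t-Z^\star\|_{L^2}$ non-increasing and hence convergent, while $\Vcal(Z_t)\to\inf\Vcal$ combined with weak-$L^2$ lower semicontinuity of $\Vcal$ (a consequence of convexity and continuity) forces every weak subsequential limit of $(Z_t)$ to be a minimizer with barycentre $\bary(Z_0)$, i.e.\ $Z^\star$. Thus $Z_t\rightharpoonup Z^\star$, and combined with the monotone convergence of $\|Z_t-Z^\star\|_{L^2}$ this upgrades to strong $L^2$-convergence. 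For general $Z_0\in L^2$, the same conclusion follows by approximation with bounded initial data and contractivity of the semigroup. I expect the main obstacle to be the uniform-in-$t$ geometric fact about $\nabla v_t\ast\gamma_1$ at infinity underlying the $L^\infty$-bound: making precise, uniformly in $t$, that $\nabla v_t\ast\gamma_1(z)$ hugs $\partial C$ in direction $z/|z|$ requires careful control of Brenier maps with varying source measure but common target $\nu$; the remainder is a rather standard application of the Hilbert-space gradient-flow toolkit combined with the Bass-measure existence results.
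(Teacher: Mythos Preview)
Your overall architecture matches the paper's: Brezis--Komura for existence and uniqueness, barycentre preservation by a direct computation, a pathwise $L^\infty$ confinement based on the geometry of $C=\mathrm{conv}(\supp\nu)$ versus $\supp\mu$, and Bruck's theorem for weak convergence to a minimizer. Your identification of the ``uniform in $t$'' behaviour of $\nabla v_t\ast\gamma_1$ at infinity as the crux is exactly right; the paper spends Lemmas~3.6--3.9 on this, and note that the uniformity requires \emph{tightness} of the source family $\{\Law(Z_t)\ast\gamma_1\}$, which the paper obtains \emph{a posteriori} from Bruck's $L^2$-boundedness (Lemma~3.3) before applying the geometric lemmas. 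You should make that order of dependence explicit.

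There is, however, a genuine gap in your passage from weak to strong convergence. You write that ``$Z_t\rightharpoonup Z^\star$, and combined with the monotone convergence of $\|Z_t-Z^\star\|_{L^2}$ this upgrades to strong $L^2$-convergence.'' This implication is false: take $Z_t=Z^\star+e_{\lfloor t\rfloor}$ for an orthonormal sequence $(e_n)$; then $Z_t\rightharpoonup Z^\star$, $\|Z_t-Z^\star\|\equiv 1$ is monotone, yet there is no strong convergence. Monotonicity of $\|Z_t-Z^\star\|$ together with weak convergence tells you only that the limit $\ell=\lim_t\|Z_t-Z^\star\|$ exists, not that $\ell=0$. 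Convexity of $\Vcal$ and $\Vcal(Z_t)\to\Vcal(Z^\star)$ do not help either, because $\Vcal$ is not strictly convex (it is translation invariant, and even after fixing the barycentre no strict convexity is available in general dimension without further assumptions).

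The paper closes this gap by a different route that exploits the \emph{law-dependence} of the problem. Its Proposition~3.5 shows, for \emph{any} $Z_0\in L^2$, that trajectories enter a fixed ball $\{|z|\le M\}$ at a finite random time and stay there; hence $(|Z_t|^2)_{t\ge 0}$ is uniformly integrable. This yields $\Wcal_2$-precompactness of $\{\Law(Z_t)\}$. Along any subsequence $\Law(Z_{t_n})\to\alpha$ in $\Wcal_2$, one has
\[
\Functional(\alpha)\;\le\;\liminf_n \Functional(\Law(Z_{t_n}))\;\le\;\liminf_n \Vcal(Z_{t_n})\;=\;\Vcal(Z^\star)\;=\;\Functional(\Law(Z^\star)),
\]
so $\alpha$ is a Bass measure and, after matching barycentres, $\alpha=\Law(Z^\star)$. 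In particular $\E[|Z_t|^2]\to\E[|Z^\star|^2]$, which together with $Z_t\rightharpoonup Z^\star$ \emph{does} give strong convergence. Thus the missing ingredient in your argument is norm convergence $\|Z_t\|_{L^2}\to\|Z^\star\|_{L^2}$, and the paper obtains it by passing through $\Functional$ (which depends only on the law) rather than $\Vcal$. Once you have strong convergence for all $Z_0\in L^2$ directly, your approximation step for unbounded $Z_0$ becomes unnecessary, though it would be a valid alternative had the bounded case been settled.
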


In our second main result, we sharpen the above by obtaining exponential convergence in the one-dimensional case:

\begin{theorem}
\label{thm:second_main} Let $d=1$. 
    In the setting of Theorem \ref{thm:first_main}, assume additionally that $Z_0\in L^\infty(\Gcal;\R^d)$, and that $\nu$ has convex support and admits a density which is bounded away from zero and infinity on its support. Then we have  for some $\kappa>0$:
    \begin{align}
    \| Z_t-Z^\star \|_{L^ 2}&\leq \| Z_0-Z^\star \|_{L^ 2}\cdot\exp\{-\kappa t\},\\
    \Vcal(Z_t)-\inf \Vcal &\leq [\Vcal(Z_0) - \inf \Vcal ]\cdot\exp\{-\kappa t\}.
    \end{align}
\end{theorem}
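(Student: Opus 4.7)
The plan is to establish strong convexity of $\Vcal$, restricted to mean-zero perturbations, on the $L^\infty$-ball where the flow lives, and then deduce the two exponential estimates from the standard Hilbert-space gradient-flow estimates. Boundedness of the flow (provided by Theorem \ref{thm:first_main}) and preservation of the barycenter confine the dynamics to this setting.

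\emph{Localization.} By Theorem \ref{thm:first_main}, since $Z_0 \in L^\infty(\Gcal;\R)$, the trajectory $(Z_t)_{t\geq 0}$ is uniformly bounded in $L^\infty$ by some $R = R(Z_0) > 0$, and $\|Z^\star\|_\infty \leq R$ (possibly enlarging $R$). Moreover the flow preserves the barycenter: since $\mathbb{E}[u_Z(Z)] = \mathbb{E}[\nabla v_Z(Z+\Gamma)] = \int y\,d\nu = \int x\,d\mu = \mathbb{E}[X]$, where the middle equality follows from the convex order applied to $\phi(x)=\pm x$, we have $W_t := Z_t - Z^\star$ with $\mathbb{E}[W_t] = 0$ for every $t \geq 0$.

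\emph{Hessian and strong convexity.} Implicitly differentiating $D_Z\Vcal = u_Z(Z) - X$ in a direction $W \in L^2(\Gcal;\R)$, and using that in one dimension the Brenier map obeys $v_Z'(z) = F_\nu^{-1}(F_{\rho_Z}(z))$ with $\rho_Z = \Law(Z)\ast\gamma_1$ and $v_Z''(z) = f_{\rho_Z}(z)/\nu'(v_Z'(z))$, a short computation should give the Hessian identity
\[\Vcal''(Z)[W,W] = \tfrac{1}{2}\iiint K_Z(\omega,\omega',y)\,(W(\omega) - W(\omega'))^2\,dy\,dP(\omega)\,dP(\omega'),\]
where $K_Z(\omega,\omega',y) := \gamma_1(Z(\omega)+y-Z(\omega'))\,\gamma_1(y)/\nu'(v_Z'(Z(\omega)+y))$. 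The apparent ``antisymmetric'' contribution naively arising in the Hessian vanishes thanks to the identity
\[\iint K_Z(\omega,\omega',y)\,dy\,dP(\omega') \;=\; u_Z'(Z(\omega)) \;=\; \iint K_Z(\omega',\omega,y)\,dy\,dP(\omega'),\]
itself a consequence of $v_Z''\cdot\nu'(v_Z') = f_{\rho_Z}$ together with the fact that $Z+\Gamma \sim \rho_Z$. On the $L^\infty$-ball of radius $R$, the bounded-density hypothesis gives $\int K_Z\,dy \geq \gamma_2(Z(\omega)-Z(\omega'))/\bar c \geq \gamma_2(2R)/\bar c$, where $\gamma_2$ is the centered Gaussian of variance $2$. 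Combined with the mean-zero condition, which gives $\mathbb{E}_{\omega,\omega'}[(W(\omega)-W(\omega'))^2] = 2\|W\|_{L^2}^2$, this yields $\Vcal''(Z)[W,W] \geq \kappa\,\|W\|_{L^2}^2$ with $\kappa := \gamma_2(2R)/\bar c > 0$, uniformly for $\|Z\|_\infty \leq R$ and $W$ of zero mean. The main obstacle is to make the Hessian identity and the cancellation fully rigorous: this requires implicitly differentiating $v_Z$ through its dependence on $\Law(Z)$, and may need a mollification argument under the merely bounded-density assumption on $\nu$.

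\emph{Conclusion via gradient-flow estimates.} Now standard gradient-flow arguments apply. Since the entire segment $\{Z^\star + sW_t : s \in [0,1]\}$ sits inside the $L^\infty$-ball of radius $R$ and $W_t$ has zero mean, integrating the strong-convexity bound along this segment gives $\langle D_{Z_t}\Vcal - D_{Z^\star}\Vcal,\,W_t\rangle \geq \kappa\,\|W_t\|_{L^2}^2$. Using $D_{Z^\star}\Vcal = 0$, we obtain $\tfrac{d}{dt}\|Z_t - Z^\star\|_{L^2}^2 = -2\,\langle D_{Z_t}\Vcal,\,W_t\rangle \leq -2\kappa\,\|Z_t - Z^\star\|_{L^2}^2$, which integrates to the first claimed estimate. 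For the second, strong convexity implies the Polyak--{\L}ojasiewicz inequality $\|D_Z\Vcal\|_{L^2}^2 \geq 2\kappa\,(\Vcal(Z) - \inf\Vcal)$ on the mean-zero affine slice; combined with $\tfrac{d}{dt}(\Vcal(Z_t) - \inf\Vcal) = -\|D_{Z_t}\Vcal\|_{L^2}^2$, this yields the second exponential decay.
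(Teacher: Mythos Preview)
Your overall strategy coincides with the paper's: confine the flow to an $L^\infty$-ball (Proposition~\ref{prop:boundedness}), establish strong convexity of $\Vcal$ on that ball in mean-zero directions, and finish with the standard PL/monotonicity Gronwall argument (the paper's final proof is essentially your ``Conclusion'' paragraph, using $D_{Z^\star}\Vcal=0$ only implicitly via the symmetrized inequality). Where you differ is in the route to strong convexity. The paper proves the Hessian formula
\[
\tfrac{d^2}{dt^2}\Vcal(Z_t)=\E\big[Hv_t(Z_t+\Gamma)\,(\Delta Z-\E[\Delta Z\mid Z_t+\Gamma])^2\big]
\]
(Lemma~\ref{lem:second derivative}), then combines a pointwise lower bound on $Hv_t$ (Lemma~\ref{lem:Hess_lower}) with a separate contraction estimate $\|\E[\Delta Z\mid Z+\Gamma]\|^2\le(1-\epsilon)\|\Delta Z\|^2$ for mean-zero $\Delta Z$ (Lemma~\ref{lem:reverse_ineq}). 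Your kernel representation is an equivalent rewriting of the same Hessian (the ``antisymmetric cancellation'' you describe is exactly the tower-property step in Lemma~\ref{lem:second derivative}), and your bound $\int K_Z\,dy\ge \phi_2(2R)/\bar c$ together with $\E[(W(\omega)-W(\omega'))^2]=2\|W\|^2$ collapses the paper's two-step estimate into one, bypassing Lemma~\ref{lem:reverse_ineq} entirely. This is a genuine simplification and yields an explicit constant. The rigor gap you flag (differentiating $v_Z$ in $Z$) is handled in the paper by Proposition~\ref{prop:derivative}: under \ref{it:A4} one has the uniform bound $Hv_t\le 1/\inf_I\tfrac{d\nu}{d\lambda}$, which justifies interchanging limits without any mollification, and the same argument applies verbatim to your kernel formula.
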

In order to obtain the latter result a careful study of the Hessian of $\Vcal$ must be carried out, which we deem of independent interest. A corollary of this analysis is the proof that $\Vcal$ is strongly convex when we restrict it to random variables which are uniformly bounded. 
%In fact, we also provide heuristic arguments that point to the validity of Theorem \ref{thm:second_main} in higher dimensions.
We conjecture the validity of Theorem \ref{thm:second_main} also in higher dimensions. We leave the latter as well as the practical implementation of the Bass gradient flow as an interesting research topic, the exponential convergence result suggesting a fast computational method.

\subsection{Related literature} Optimal transport as a field in mathematics goes back to Monge \cite{Mo81} and Kantorovich \cite{Ka42}, who established its modern formulation. The seminal results of Benamou, Brenier, and McCann \cite{Br87, Br91, BeBr99, Mc94, Mc95} form the basis of the modern theory, with striking applications in a variety of different areas, see the monographs \cite{Vi03, Vi09, AmGi13, Sa15}. 

\smallskip

We are interested in transport problems where the transport plan satisfies an additional martingale constraint. This additional requirement arises naturally in finance (e.g.\ \cite{BeHePe12}), but is also of independent mathematical interest. For example there are notable consequences for the study of martingale inequalities (e.g.\ \cite{BoNu13,HeObSpTo12,ObSp14}) and the Skorokhod embedding problem (e.g.\ \cite{BeCoHu14, KaTaTo15, BeNuSt19}). Early articles on this topic of \textit{martingale optimal transport} include \cite{HoNe12, BeHePe12, TaTo13, GaHeTo13, DoSo12, CaLaMa14}. The study of irreducibility of a pair of marginals $(\mu,\nu)$ was initiated by Beiglb\"ock and Juillet \cite{BeJu16} in dimension one and extended in the works \cite{GhKiLi19,DeTo17,ObSi17} to multiple dimensions.

\smallskip

Continuous-time martingale optimal transport problems have received much attention in the recent years; see e.g.\ \cite{BeHeTo15, CoObTo19, GhKiPa19,GuLoWa19, GhKiLi20, ChKiPrSo20, GuLo21}. In this paper we concern ourselves with the specific structure given by the martingale Benamou--Brenier problem, introduced in \cite{BaBeHuKa20} in probabilistic language and in \cite{HuTr17} in PDE language, and subsequently studied through the point of view of duality theory in \cite{BBST23}. In the context of market impact in finance, the same kind of problem appeared independently in a work by Loeper \cite{Lo18}.
It was also shown in \cite{BaBeHuKa20} that the optimizer $\hat{M}$ of the problem \eqref{MBMBB} is the process whose evolution follows the movement of Brownian motion as closely as possible with respect to an \textit{adapted Wasserstein distance} (see e.g.\ \cite{BaBaBeEd19a, Fo22a}) subject to the given marginal constraints. 

\smallskip

The connection between the martingale Benamou--Brenier problem and Bass martingales was revealed in \cite{BaBeHuKa20,BBST23}. If $d=1$ then an algorithm for obtaining a Bass martingale with given marginals was proposed by Conze and Henry-Labordere in \cite{CoHL21}. This algorithm was subsequently studied in detail in \cite{AcMaPa23}, and its multidimensional form has been proposed in \cite{JoLoOb23}. Recently, objects akin to Bass martingales have been proposed in \cite{Tsch24}, \cite{BoGu24}, and \cite{BaLoOb24,BePaRi24}. 

\section{Regularity of the lifted Bass functional}
Throughout this paper, let $\mu, \nu \in \Pcal_2(\R^d)$.
We recall from \eqref{def.bass.func} the associated Bass functional $\Functional \colon \Pcal_2(\R^d) \to \R$ given by
\begin{equation}
    \label{eq:def.Bass.functional}
    \Functional(\alpha) := \MCov(\alpha \ast \gamma_1, \nu) - \MCov(\alpha,\mu).
\end{equation}
Assuming that $\mu$ and $\nu$ are in convex order ($\mu \le_{cx} \nu$), this functional takes its values in $[0,\infty)$.

In a first step, we will identify $\Functional$ with a convex, Frechet differentiable functional $\Vcal$ on a Hilbert space.
To this end, we fix a probability space $(\Omega,\mathcal F,\PP)$ with a random variable $X \sim \mu$ and an independent random variable $\Gamma \sim \gamma_1$ where $\gamma_1$ is here the $d$-dimensional standard Gaussian.
Define the sigma-algebras \[\Gcal := \sigma(X)\,\text{ and }\,\Hcal:= \sigma(X,\Gamma).\]
We define $\Vcal \colon L^2(\Gcal;\R^d) \to \R$ by

\begin{equation}
    \label{eq:def.Bass.Hilbert.functional}
    \Vcal(Z) := \sup_{ \substack{ Y \in L^2(\Hcal;\R^d), \\ Y \sim \nu } }\mathbb E[ (Z + \Gamma) \cdot Y - Z \cdot X]
\end{equation}

\begin{lemma}\label{lem:Vcal}
    Let $Z \in L^2(\Gcal;\R^d)$.
    The functional $\Vcal$ given in \eqref{eq:def.Bass.Hilbert.functional} enjoys the following properties:
    \begin{enumerate}[label = (\roman*)]
        \item \label{it:lem.Fprops.Frechet} $\Vcal$ is Frechet differentiable with derivative $D_Z \Vcal = \nabla v \ast \gamma_1(Z) - X \in L^2(\Gcal;\R^d)$ where $v$ is the Brenier potential from $\Law(Z + B)$ to $\nu$.
        \item \label{it:lem.Fprops.monotone}  If the law of $(X,Z)$ is monotone,\footnote{Two $\R^d$-valued random variables $X$ and $Z$ with finite second moment are coupled monotonically if $\E[X \cdot Z] = \MCov(\Law(X),\Law(Z))$.} then $\Vcal(Z) = \Functional(\Law(Z))$, else $\Vcal(Z) \ge \Functional(\Law(Z))$.
        \item \label{it:lem.Fprops.convex} $\Vcal$ is convex and weakly lower semicontinuous.
        \item \label{it:lem.Fprops.minimizer}If $\Functional$ admits an optimizer in $\Pcal_2(\R^d)$, then for every $m\in\mathbb R^d$ there is a unique $Z^\star \in L^2(\Gcal;\R^d)$ with $\mathbb E[Z^\star]=m$,  $\Vcal(Z^\star)= \min_{\alpha \in \Pcal_2(\R^d)}\Functional( \alpha)$, and $(X,Z^\star)$ monotone.
    \end{enumerate}
\end{lemma}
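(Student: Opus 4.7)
The starting point is the identity
\[
\Vcal(Z) = \MCov\bigl(\Law(Z+\Gamma),\nu\bigr) - \mathbb E[Z\cdot X],
\]
which isolates an affine term in $Z$ from a maximal-covariance term depending only on the law of $Z+\Gamma$. To justify it, note that $\sup_Y \mathbb E[(Z+\Gamma)\cdot Y]$ over $Y\in L^2(\Hcal;\R^d)$ with $Y\sim\nu$ is at most $\MCov(\Law(Z+\Gamma),\nu)$, and equality is realized by $Y=\nabla v(Z+\Gamma)$, where $v$ is the Brenier potential from $\Law(Z+\Gamma)$ (absolutely continuous thanks to Gaussian convolution) to $\nu$; this $Y$ lies in $L^2(\Hcal;\R^d)$ as a function of $Z+\Gamma$.

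With this identity, part \ref{it:lem.Fprops.convex} is immediate: $\Vcal$ is a supremum of functionals $Z\mapsto \mathbb E[(Z+\Gamma)\cdot Y - Z\cdot X]$ that are affine and weakly continuous in $Z\in L^2(\Gcal;\R^d)$, hence convex and weakly lower semicontinuous. For part \ref{it:lem.Fprops.monotone}, the covariance inequality $\mathbb E[X\cdot Z]\le \MCov(\Law(X),\Law(Z))$---with equality exactly when $(X,Z)$ is a monotone coupling---substituted into the identity yields $\Vcal(Z)\ge \Functional(\Law(Z))$, with equality under monotonicity.

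For part \ref{it:lem.Fprops.Frechet}, I fix $W\in L^2(\Gcal;\R^d)$, plug $Y=\nabla v(Z+\Gamma)$ into the supremum defining $\Vcal(Z+W)$, and use independence of $\Gamma$ and $\Gcal$ via the tower formula,
\[
\mathbb E[W\cdot \nabla v(Z+\Gamma)] = \mathbb E\bigl[W\cdot \mathbb E[\nabla v(Z+\Gamma)\mid\Gcal]\bigr] = \mathbb E\bigl[W\cdot (\nabla v\ast\gamma_1)(Z)\bigr],
\]
to obtain the lower estimate
\[
\Vcal(Z+W)-\Vcal(Z)\ge \mathbb E\bigl[W\cdot\bigl((\nabla v\ast\gamma_1)(Z)-X\bigr)\bigr].
\]
Reversing the roles of $Z$ and $Z+W$, and denoting by $v_W$ the Brenier potential from $\Law(Z+W+\Gamma)$ to $\nu$, produces the matching upper estimate with gradient $(\nabla v_W\ast\gamma_1)(Z+W)-X$ at the perturbed point. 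Subtracting the candidate derivative and applying Cauchy--Schwarz reduces Frechet differentiability to the $L^2$-continuity of $Z\mapsto(\nabla v\ast\gamma_1)(Z)$ along $t\mapsto Z+tW$. This is the main technical point of \ref{it:lem.Fprops.Frechet}; the Gaussian mollification regularises the Brenier map and should yield the needed stability via standard stability of optimal transport in $\Pcal_2$.

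For part \ref{it:lem.Fprops.minimizer}, existence of $Z^\star$ is constructive: the convex-order assumption $\mu\le_{cx}\nu$ forces $\mu$ and $\nu$ to share their barycenter, so a direct calculation shows that both $\Functional$ and $\Vcal$ are invariant under translations $\alpha\mapsto T_c{}_\#\alpha$ (resp.\ $Z\mapsto Z+c$), $c\in\R^d$. Given an optimizer $\hat\alpha$ of $\Functional$, translating it to $\hat\alpha_m$ of barycenter $m$, I take $Z^\star$ to be the monotone rearrangement of $X$ with $\Law(Z^\star)=\hat\alpha_m$; then \ref{it:lem.Fprops.monotone} yields $\Vcal(Z^\star)=\Functional(\hat\alpha_m)=\min\Functional$. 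For uniqueness, if $Z'$ satisfies the three conditions, \ref{it:lem.Fprops.monotone} forces $\Law(Z')$ to be a minimizer of $\Functional$ with barycenter $m$, so uniqueness of $Z^\star$ reduces to uniqueness of the Bass measure with prescribed mean. I plan to extract this from the first-order condition $(\nabla v\ast\gamma_1)(Z)=X$ at critical points (provided by \ref{it:lem.Fprops.Frechet}) together with the Bass identities \eqref{eq_def_id_bm}, from which monotonicity of $(X,Z')$ and the mean constraint pin down $Z'=Z^\star$. I expect this rigidity step to be the delicate part of the proof.
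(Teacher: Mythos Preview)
Your overall strategy matches the paper's: the decomposition $\Vcal(Z)=\MCov(\Law(Z+\Gamma),\nu)-\E[Z\cdot X]$, the sandwich argument for the derivative via the Brenier maps at $Z$ and at $Z+W$, and the construction of $Z^\star$ via the Brenier map from $\mu$ to the Bass measure are precisely what the paper does. For \ref{it:lem.Fprops.convex} your argument (supremum of affine, weakly continuous functionals) is actually cleaner than the paper's, which instead deduces convexity from the subgradient inequality established in the course of \ref{it:lem.Fprops.Frechet} and then invokes a general convexity/continuity $\Rightarrow$ weak l.s.c.\ result; your route is self-contained and avoids any dependence on \ref{it:lem.Fprops.Frechet}.

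The genuine gap is in \ref{it:lem.Fprops.Frechet}. The $L^2$-continuity you isolate is \emph{not} an immediate consequence of ``standard stability of optimal transport in $\Pcal_2$''. Stability gives you weak convergence of the optimal \emph{couplings} $\Law(Z_t+\Gamma,\nabla v_t(Z_t+\Gamma))\to\Law(Z_0+\Gamma,\nabla v_0(Z_0+\Gamma))$, but this does not by itself yield $\nabla v_t(Z_t+\Gamma)\to\nabla v_0(Z_0+\Gamma)$ as \emph{random variables} in $L^2(\Hcal;\R^d)$, which is what your sandwich needs (after conditioning on $\Gcal$). The paper closes this with a real argument: it exploits that $\beta^t:=\Law(Z_t+\Gamma)\to\beta^0$ in \emph{total variation} (this is where the Gaussian convolution is used essentially), applies an auxiliary lemma to deduce $\nabla v_t\to\nabla v_0$ in $\beta^0$-probability, and then controls $|\nabla v_t(Z_t+\Gamma)-\nabla v_0(Z_0+\Gamma)|$ by splitting into a term handled via the TV bound and a term handled via a second stability step for the fixed map $\nabla v_0$. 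Uniform integrability (since $\nabla v_t(Z_t+\Gamma)\sim\nu$ for all $t$) then upgrades convergence in probability to $L^2$. You should expect to do comparable work here; ``Gaussian mollification plus standard stability'' is the right slogan but not yet a proof.

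On \ref{it:lem.Fprops.minimizer}: the paper's own proof only addresses \emph{existence}, citing that the Bass measure is the push-forward of $\mu$ by a Brenier map $\nabla u$ and setting $Z^\star=\nabla u(X)$; translation invariance then adjusts the mean. Uniqueness, though asserted in the statement, is not argued in the paper's proof of this lemma. Your plan (first-order condition plus the Bass identities \eqref{eq_def_id_bm}) is reasonable, but be aware you are attempting more than the paper does at this spot; the missing ingredient is that the Bass measure is unique up to translation, after which the monotonicity constraint and the mean determine $Z^\star$ as $\nabla u(X)$ $\mu$-almost surely.
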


\begin{proof}
    \ref{it:lem.Fprops.Frechet}: 
    Let $(Z_t)_{t \in [0,1]}$ be a continuous curve in $L^2(\mathcal G; \mathbb R^d)$.
    Since $\Law(Z_t + \Gamma) \in \Pcal_2(\R^d)$ is absolutely continuous w.r.t.\ the Lebesgue measure, there is by Brenier's theorem a convex function $v_t \colon \R^d \to \R$ with $\nabla v_t(Z_t + \Gamma) \sim \nu$.
    In particular, the coupling
    \[
        \pi^t := \Law(Z_t + \Gamma, \nabla v_t(Z_t + \Gamma) )
    \]
    is optimal for $\MCov(\beta^t, \nu)$ where $\beta^t := \Law(Z_t + \Gamma) = \Law(Z_t) \ast \gamma_1$.
    Therefore, we deduce from stability of optimal transport maps (see, for example, \cite[Theorem 5.20]{Vi09}) and uniqueness of the optimizer to the optimization problem $\MCov(\beta^t,\nu)$, that $\pi^t \to \pi^0$ in distribution for $t \to 0$.
    At the same time, it is straightforward to see that $\beta^t \to \beta^0$ in total variation.
    Therefore all requirements of \cite[Lemma A.2]{JoPa24} are met and we obtain that $\nabla v_t \to \nabla v_0$ in $\beta^0$-probability.
    We want to use the above to show that $\nabla v_t(Z_t + \Gamma) \to \nabla v_0(Z_0 + \Gamma)$ in probability. 
    To this end, fix $\epsilon > 0$ and note that
    \begin{multline*}
        \mathbb P\left( |\nabla v_t(Z_t + \Gamma) - \nabla v_0(Z_0 + \Gamma)| \ge 3 \epsilon \right)
        \\
        \le
        \underbrace{\mathbb P\left( |\nabla v_t(Z_t + \Gamma) - \nabla v_0(Z_t + \Gamma)| \ge \epsilon \right)}_{=:A(t)}
        +
        \underbrace{\mathbb P\left( |\nabla v_0(Z_t + \Gamma) - \nabla v_0(Z_0 + \Gamma)| \ge 2\epsilon \right)}_{=:B(t)}.
    \end{multline*}
    To deal with the first term of the right-hand side, observe that
    \[
        A(t)
        \le
        \mathbb P\left( |\nabla v_t(Z_0 + \Gamma) - \nabla v_0(Z_0 + \Gamma)| \ge \epsilon \right) + {\rm TV}(\beta^t,\beta^0),
    \]
    where both terms of the right-hand side vanish be our previous arguments.
    In order to handle the second term, we use that $\hat \pi^t := \Law(Z_t + \Gamma, \nabla v_0(Z_t + \Gamma))$ is an optimal coupling with marginals $\beta^t$ and $\nu^t := (\nabla v_0)_\# \beta^t$.
    Since $\beta^t \to \beta^0$ in total variation, the same holds true for the convergence of $\nu^t \to \nu^0 = \nu$.
    As before, using stability of optimal transport maps, we find that $\hat \pi^t \to \hat \pi^0 = \pi^0$ in distribution.
    Hence, we can assume w.l.o.g.\ that there is $\hat Z_t \sim \beta^0$ with 
    \begin{equation}
        \label{eq:first_term}
        |Z_t + \Gamma - \hat Z_t| + |\nabla v_0(Z_t + \Gamma) - \nabla v_0(\hat Z_t))| \to 0
    \end{equation} 
    in probability.
    Consequently,
    \[
        B(t) \le 
        \mathbb P\left( | \nabla v_0(Z_t + \Gamma) - \nabla v_0(\hat Z_t)| \ge \epsilon \right)
        +
        \mathbb P\left( | \nabla v_0(\hat Z_t) - \nabla v_0(Z_0 + \Gamma)| \ge \epsilon \right).
    \]
    Here, letting $t \to 0$, the first term vanishes on the right-hand side vanishes by \eqref{eq:first_term} while the second term vanishes due to \cite[Lemma A.1]{JoPa24}.
    We have shown that $A(t) + B(t) \to 0$ as $t \to 0$, which yields the claim that $\nabla v_t(Z_t + \Gamma) \to \nabla v_0(Z_0 + \Gamma)$ in probability.
    Since the family $(\|\nabla v_t(Z_t + \Gamma)\|_{L^2}^2)_{t \in [0,1]}$ is uniformly integrable as $\nabla v_t(Z_t + \Gamma) \sim \nu$, we have shown that
    \begin{equation}
        \label{eq:L2 convergence}
        \nabla v_t(Z_t + \Gamma) \to \nabla v_0(Z_0 + \Gamma)\quad\text{in }L^2(\mathcal H;\mathbb R^d).
    \end{equation}
    Next, let $Z_1 \in L^2(\Gcal;\R^d)$ and consider in the particular curve $(Z_t)_{t \in [0,1]}$ with 
    \[
        Z_t := (1-t) Z + t Z_1 \quad \text{for } t \in [0,1].
    \]
    As $\pi^0$ maximizes $\MCov(\beta^0, \nu)$ we get the upper bound
    \begin{align*}
        \Vcal(Z_t) - \Vcal(Z_0) &= \E\left[ (Z_t + \Gamma) \cdot \nabla v_t(Z_t + \Gamma) - (Z_0 + \Gamma) \cdot \nabla v_0(Z_0 + \Gamma) - (Z_t - Z_0) \cdot X\right]
        \\
        &\le \E\left[ (Z_t + \Gamma) \cdot \nabla v_t(Z_t + \Gamma) -  (Z_0 + \Gamma)  \cdot \nabla v_t(Z_t + \Gamma)  - (Z_t - Z_0) \cdot X \right] \\
        &= t \E\left[ (Z_1 - Z_0) \cdot \nabla v_t(Z_t + \Gamma) - (Z_1 - Z_0) \cdot X  \right] \\
        &= t \E\left[ (Z_1 - Z_0) \cdot \left( (\nabla v_t \ast \gamma_1)(Z_t)  - X \right)\right],
    \end{align*}
    where the right-hand side  used independence of $Z_t$ and $\Gamma$ for the last equality.
    Similarly, we obtain the lower bound
    \begin{align} \nonumber
        \Vcal(Z_t) - \Vcal(Z_0) &\ge t \E\left[ (Z_1 - Z_0) \cdot \left( \nabla v_0(Z_0 + \Gamma) - X \right) \right] \\
        &= t \E\left[ (Z_1 - Z_0) \cdot\left( (\nabla v_0\ast \gamma_1)(Z_0) - X \right) \right]. \label{eq:Fprops.shown}
    \end{align}
    Combining these two inequalities, dividing by $t$ and then sending $t$ to zero yields
    \[
        \lim_{t \to 0} \frac1t \left( \Vcal(Z_t) - \Vcal(Z_0) \right) = \E\left[ (Z_1 - Z_0) \cdot \left( (\nabla v_0 \ast \gamma_1)(Z_0) - X \right) \right],
    \]
    where we invoked \eqref{eq:L2 convergence} for the upper bound.
    We have shown that $\Vcal$ is Gateaux differentiable with derivative $D_Z \Vcal = \nabla v_0 \ast \gamma_1(Z) - X \in L^2(\mathcal G;\mathbb R^d)$.
    Furthermore, since as consequence of \eqref{eq:L2 convergence} the map $Z \mapsto D_Z \Vcal : L^2(\mathcal G;\mathbb R^d) \to L^2(\mathcal G;\mathbb R^d)$ is continuous, we conclude that $\Vcal$ is Frechet differentiable.

    \ref{it:lem.Fprops.monotone}: If $\Law(X,Z)$ is monotone, then $\E[X\cdot Z] = \MCov(\mu,\alpha)$ where $\alpha:=\Law(Z)$. Thus,
    \begin{align*}
        \Vcal(Z) &= \sup_{\substack{Y \in L^2(\Hcal;\R^d), \\ Y \sim \nu }} \E\left[ (Z + \Gamma) \cdot Y \right] - \MCov(\mu,\alpha) 
        \\
        &= \MCov(\nu,\alpha\ast\gamma_1) - \MCov(\mu,\alpha) = \Functional(\alpha),
    \end{align*}
    proving the claim.

    \ref{it:lem.Fprops.convex}: We have seen in \eqref{eq:Fprops.shown} that $\Vcal(Z_1) \ge \Vcal(Z_0) + D_{Z_0}\Vcal(Z_1 - Z_0)$ for every $Z_0,Z_1 \in L^2(\Gcal;\R^d)$. It readily follows that $\Vcal$ is convex.
    Thus, $\Vcal : L^2(\mathcal G;\mathbb R^d) \to \mathbb R$ is continuous and convex, which implies by \cite[Theorem 9.1]{BaCo11} that $\Vcal$ is also lower semicontinuous.

    \ref{it:lem.Fprops.minimizer}:
    If $\Functional$ admits an optimizer in $\Pcal_2(\R^d)$, then by \cite[Theorem 1.5]{BaSchTsch23} there is a Bass martingale from $\mu$ to $\nu$ with Bass measure $\alpha^\star \in \Pcal_2(\R^d)$, where $\alpha^\star$ is a minimizer of $\Functional$. 
    By \cite[Theorem 1.4]{BBST23}, $\alpha^\star$ is the push-forward of $\mu$ via a Brenier map $\nabla u$. 
    Defining $Z^\star =\nabla u (X)$ we see that $\Vcal (Z^\star)=\Functional(\alpha^\star)$, and by construction $(X,Z^\star)$ is monotone. Since $\Vcal$ is translation invariant, we can set the mean of $Z^\star$ to be any real number, without affecting its optimality or the monotonicity of $(X,Z^\star)$.
\end{proof}

\begin{remark}
    It is somewhat remarkable that despite $\Functional$ being not $\Wcal_2$-differentiable ($\MCov(\cdot,\mu)$ may fail to be differentiable), by passing on to the Hilbert space setting, we obtain a Frechet differentiable functional $\Vcal$.
    The reason for this is that while there may be multiple optimal couplings from $\mu$ to $\alpha := \Law(Z)$ for $Z \in L^2(\Gcal;\R^d)$, by selecting the pair $(X,Z)$ we have already fixed (in the Hilbert space setting) the coupling and thereby avoid this issue.
\end{remark}

\begin{lemma}
    Let $(Z_t)_{t \ge 0}$ be an absolutely continuous curve in $L^2(\Gcal;\R^d)$ with derivative $\frac{d}{dt} Z_t = - D_{Z_t} \Vcal$. 
    Then $t\mapsto\mathbb E[Z_t]$ is a constant.
\end{lemma}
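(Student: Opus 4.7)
The plan is to differentiate $\E[Z_t]$ under the integral and check that the resulting expression vanishes, using two facts: $(\nabla v_t \ast \gamma_1)(Z_t)$ has the same expectation as $\nabla v_t(Z_t + \Gamma)$, and the latter random variable has law $\nu$, whose barycenter coincides with that of $\mu$ by convex order.

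\medskip

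\textbf{Step 1 (interchanging expectation and derivative).} Since $(Z_t)_{t\ge 0}$ is absolutely continuous in $L^2(\Gcal;\R^d)$ with derivative $-D_{Z_t}\Vcal$, the fundamental theorem of calculus in $L^2$ gives, for $0\le s\le t$,
\[
Z_t - Z_s = -\int_s^t D_{Z_r}\Vcal\,dr
\]
as a Bochner integral. Applying the continuous linear functional $\E[\,\cdot\,] \colon L^2(\Gcal;\R^d) \to \R^d$ and using Lemma~\ref{lem:Vcal}\ref{it:lem.Fprops.Frechet}, we obtain
\[
\E[Z_t] - \E[Z_s] = -\int_s^t \E\bigl[(\nabla v_r \ast \gamma_1)(Z_r) - X\bigr]\,dr.
\]

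\textbf{Step 2 (rewriting the convolution).} By the very definition of convolution and the independence of $\Gamma \sim \gamma_1$ from $\Gcal$, we have for $\Gcal$-measurable $Z_r$
\[
(\nabla v_r \ast \gamma_1)(Z_r) = \E[\nabla v_r(Z_r + \Gamma)\mid \Gcal],
\]
so $\E[(\nabla v_r \ast \gamma_1)(Z_r)] = \E[\nabla v_r(Z_r + \Gamma)]$.

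\textbf{Step 3 (barycenter identification).} By construction $\nabla v_r$ is the Brenier map from $\Law(Z_r)\ast\gamma_1$ to $\nu$, hence $\nabla v_r(Z_r + \Gamma) \sim \nu$, and therefore
\[
\E[\nabla v_r(Z_r + \Gamma)] = \bary(\nu).
\]
On the other hand, $\mu \le_{cx} \nu$ forces equality of the first moments, $\bary(\mu)=\bary(\nu)$, so $\E[X] = \bary(\nu)$. Plugging into Step~1 yields $\E[Z_t]-\E[Z_s]=0$ for all $s\le t$, which is the desired conservation.

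\medskip

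There is no real obstacle here; the only point requiring minor care is the interchange of expectation and the $L^2$-valued integral in Step~1, which is immediate from the continuity of $\E$ on $L^2$. The conceptual content of the statement is simply that the Bass gradient flow preserves first moments because the push-forward constraint $\nabla v_t(Z_t+\Gamma)\sim \nu$ enforces the correct mean at each time, matched to $\bary(\mu)=\E[X]$ via convex order.
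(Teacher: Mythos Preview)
Your proof is correct and follows essentially the same approach as the paper: both compute $\E[D_{Z_t}\Vcal]$ by rewriting $(\nabla v_t\ast\gamma_1)(Z_t)$ as $\E[\nabla v_t(Z_t+\Gamma)\mid\Gcal]$, use that $\nabla v_t(Z_t+\Gamma)\sim\nu$ to identify its mean as $\bary(\nu)$, and conclude via $\bary(\mu)=\bary(\nu)$. Your version is slightly more explicit about the Bochner-integral justification for exchanging $\E$ with the time integral, but the argument is otherwise identical.
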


\begin{proof}
    We have $D_{Z_t} \Vcal = \nabla v_t \ast \gamma_1(Z_t) - X $ where $\nabla v_t$ is the Brenier map from $\Law(Z_t + \Gamma)$ to $\nu$. Hence 
    \[ 
        \E[\nabla v_t \ast \gamma_1(Z_t) ] 
        = \mathbb E[ \mathbb E[\nabla v_t (Z_t+\Gamma) | \mathcal G] ]
        = \mathbb E[ \nabla v_t (Z_t+\Gamma)] = \bary(\nu).
    \]
    Since also $\bary(\nu)= \bary(\mu)=\mathbb E[X]$, we conclude that $\frac{d}{dt}\E[Z_t] = 0$ for almost every $t$.
\end{proof}

\section{The Gradient flow}

\subsection{Existence and strong convergence} \label{sec:gradient flow}

To show $L^2$-convergence of the gradient flow, we will impose further assumptions on the marginals $(\mu,\nu)$. 
Throughout this section we denote by 
\[C_\rho:=\text{co(supp $\rho$)},\] 
the convex hull of the support of a measure $\rho \in \Pcal(\R^d)$. Since the set $C_\nu$ features prominently in the subsequent parts of our work, we will simply write \[ C:=C_\nu. \]
Without loss of generality we assume that $dim(C)=d$, otherwise restricting to the affine space spanned by $C$ to achieve this. We also write  \[I:=\text{int}(C),\] 
for the interior of $C$, which is thus a nonempty open subset of $\R^d$. If $\mu\le_{cx} \nu$ then  $C_\mu\subset C$ and $\bary(\nu)=\bary(\mu)$. Hence we may assume w.l.o.g.\ that $\bary(\nu)=\bary(\mu)=0 \in I$ and write $\mathcal P_2^0(\mathbb R^2)$ for the subset of $\mathcal P_2(\mathbb R^2)$ consisting of centred measures.

Subsequently we will work with the following assumptions: \medskip

\noindent \textbf{Assumption (A):} We have
$\mu\le_{cx} \nu$ and
  \begin{enumerate}[label = (A\arabic*)]
    \item \label{it:A1} the pair $(\mu,\nu)$  is irreducible;
    \item \label{it:A2} $\nu$ is compactly supported, i.e.\ $C$ is compact;
    \item \label{it:A3} $\supp(\mu) \subseteq I$, i.e. $C_\mu$ is compactly contained in $I$. 
\end{enumerate}  

To stress the importance of this assumption, we recall the following fact. 
If we only assume \ref{it:A1} and \ref{it:A2}, i.e., the pair $(\mu,\nu)$ is irreducible and $\nu$ is in addition compactly supported, the Bass functional may fail to admit a minimizer in $\Pcal_2(\R^d)$ and minimizing sequences $(\alpha_n)_{n \in \N}$ may even fail to be tight (see \cite[Example 6.7]{BBST23}% {\color{cyan}and \cite[Example XXX]{BaPaSc24b}}
).
On the other hand, assuming \ref{it:A3} in addition to \ref{it:A1} and \ref{it:A2}, this drawback cannot happen anymore, as shown in the subsequent lemma.
For reasons of presentation we present these results already here while the proofs rely on some technicalities, which are shown in the subsequent section, and are therefore deferred to Subsection \ref{ssec:proofs}.

\begin{lemma}
    \label{lem:existence_minimizer}
    Under Assumption $(A)$ the functional $\Vcal$ admits a minimizer in $L^2$ and $\Functional$ a minimizer in $\Pcal_2(\R^d)$. Furthermore, any minimizer of $\Vcal$ is bounded.
    %the minimizers of $\Vcal$ have a compact support.
\end{lemma}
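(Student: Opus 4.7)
The plan is to apply Theorem \ref{MainTheorem} in order to extract a candidate Bass martingale and its Bass measure $\hat\alpha$ from (A1), and then use (A2)--(A3) to show that $\hat\alpha$ is automatically compactly supported. Under (A1) the existence of the stretched Brownian motion together with Theorem \ref{MainTheorem} yields a Bass martingale with Bass measure $\hat\alpha\in\Pcal(\R^d)$ and Brenier potential $\hat v$ satisfying the identities \eqref{eq_def_id_bm}. Once $\supp\hat\alpha$ is shown to be compact we automatically have $\hat\alpha\in\Pcal_2(\R^d)$, whence Theorem \ref{prop_alpha_vari_mc} and Lemma \ref{lem:Vcal}\ref{it:lem.Fprops.minimizer} deliver the desired minimizers of $\Functional$ and $\Vcal$.

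The technical heart of the argument is to show that the smooth convex-gradient map $\Psi:=\nabla(\hat v\ast\gamma_1)=\E[\nabla\hat v(\,\cdot\,+\Gamma)]$ has $\Psi^{-1}(\supp\mu)$ bounded in $\R^d$; combined with $\Psi_\#\hat\alpha=\mu$ from \eqref{eq_def_id_bm}, this gives $\supp\hat\alpha$ compact. I would prove it through a recession-function analysis of $\hat v$. By (A2) and $\nabla\hat v_\#(\hat\alpha\ast\gamma_1)=\nu$, a subdifferential selection of $\hat v$ lies in $\supp\nu\subseteq C$ almost everywhere, and since $\partial\hat v$ is the closed convex hull of limits of such selections we get $\partial\hat v\subseteq C$ everywhere; in particular $\hat v$ is Lipschitz with $\hat v^\infty(e)\le h_C(e):=\sup_{c\in C}\langle c,e\rangle$ for every unit vector $e$. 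Conversely $\overline{\mathrm{image}(\partial\hat v)}\supseteq \supp\nu$ gives $\hat v^\infty(e)\ge h_{\supp\nu}(e)=h_C(e)$, so $\hat v^\infty = h_C$. Convexity then makes $t\mapsto\langle\nabla\hat v(z+te),e\rangle$ nondecreasing and bounded above by $h_C(e)$; the recession identity forces its limit at $t\to\infty$ to equal $h_C(e)$, and dominated convergence transfers this to $\Psi$, yielding $\langle\Psi(z_n),e\rangle\to h_C(e)$ for every $z_n$ with $|z_n|\to\infty$ and $z_n/|z_n|\to e$. But (A3) forces $h_{C_\mu}(e)<h_C(e)$ strictly for every unit $e$ (as $C_\mu$ is a compact convex subset of $\mathrm{int}(C)$), so $\Psi(z_n)\in\supp\mu\subseteq C_\mu$ would contradict this convergence: hence $\Psi^{-1}(\supp\mu)$ is bounded.

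For the final claim (\emph{any minimizer of $\Vcal$ is bounded}), first observe that $\min\Vcal=\min\Functional$ (take $Z$ coupled monotonely to $X$ with $\Law(Z)=\hat\alpha$ and invoke Lemma \ref{lem:Vcal}\ref{it:lem.Fprops.monotone}). Thus, for any minimizer $Z^\star$ of $\Vcal$, the chain $\Vcal(Z^\star)\ge\Functional(\Law(Z^\star))\ge\min\Functional=\Vcal(Z^\star)$ forces equality throughout, so $(X,Z^\star)$ is monotone and $\Law(Z^\star)$ minimizes $\Functional$. Since the stretched Brownian motion is unique, its Bass measure is unique up to translation; so $\Law(Z^\star)$ is a translate of $\hat\alpha$, is therefore compactly supported, and $Z^\star$ is bounded. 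The main obstacle is the recession identity $\hat v^\infty=h_C$ in both directions and its dominated-convergence transfer to $\Psi$, both of which crucially rely on (A2) and on $\dim C=d$.
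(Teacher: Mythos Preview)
Your argument is correct and follows the same high-level strategy as the paper: obtain a Bass martingale from irreducibility, then show that the Bass measure $\hat\alpha$ has compact support by proving that $\Psi^{-1}(C_\mu)$ is bounded, where $\Psi=\nabla(\hat v\ast\gamma_1)$. The treatment of the final claim (any minimizer of $\Vcal$ is bounded) via uniqueness of the Bass measure up to translation is also in line with the paper, which begins its proof with the characterisation ``$Z$ minimizes $\Vcal$ iff $\Law(X,Z)$ is monotone and $\Law(Z)$ is a Bass measure''.

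Where you genuinely diverge is in the proof that $\Psi^{-1}(C_\mu)$ is bounded. The paper invokes its geometric slice apparatus (Lemma~\ref{lem:S} and Lemma~\ref{lem:Brenier boundary_2}): it fixes $\delta<\Delta$, shows that $|z|\ge M$ forces $\nabla u(z)\in S_{z/|z|,\delta}$, and concludes since $S_{a,\delta}\cap C_\mu=\emptyset$ for all $a\in S^{d-1}$. You replace this with a pure convex-analysis argument: from $\partial\hat v\subseteq C$ and $\overline{\mathrm{range}(\partial\hat v)}\supseteq\supp\nu$ you identify the recession function $\hat v^\infty=h_C$, then use monotonicity of $\nabla\hat v$ together with dominated convergence to obtain $\langle\Psi(z_n),e\rangle\to h_C(e)$ along escaping sequences, contradicting $h_{C_\mu}(e)<h_C(e)$. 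Your route is more self-contained and avoids developing the slice lemmas for this result in isolation; the paper's route, however, is not wasted effort, since Lemmas~\ref{lem:Brenier boundary}--\ref{lem:Brenier boundary mu} are reused verbatim in Proposition~\ref{prop:boundedness} to control the gradient flow. One small point worth making explicit in your write-up: the passage from rays to general sequences $z_n$ with $z_n/|z_n|\to e$ needs the monotone-operator inequality $\langle\nabla\hat v(w_n)-\nabla\hat v(w_0),w_n-w_0\rangle\ge 0$ (divide by $|w_n|$ and take $w_0$ arbitrary) rather than just the one-dimensional ray monotonicity you state; this is implicit in your ``recession identity'' step but deserves a line.
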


As we have existence of a minimizer to the Bass functional under Assumption (A), the classical theory of gradient flows in Hilbert spaces provides us with the next result.

\begin{lemma}
\label{lem:Brezis_Bruck}
    Let $Z_0 \in L^2(\mathcal G;\R^d)$. Under Assumption $(A)$
     there exists a curve $(Z_t)_{t \ge 0}$ in $L^2(\mathcal G;\R^d)$ starting in $Z_0$ such that 
    \begin{align} \label{eq:Brezis_Bruck}
        \frac{d}{dt} Z_t = - D_{Z_t} \Vcal \text{ for a.e.\ }t > 0, \quad \text{and} \quad \frac{d^+}{dt} Z_t = - D_{Z_t} \Vcal \text{ for }t > 0.
    \end{align}
    Further, there exists a minimizer $Z^\star \in L^2(\Gcal;\R^d)$ of $\Vcal$ such that $\lim_{t \to \infty} Z_t = Z^\star$ weakly in $L^2$ and $\lim_{t \to \infty} \Vcal(Z_t) = \Vcal(Z^\star)$.
\end{lemma}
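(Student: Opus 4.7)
The plan is to invoke two classical results on the gradient flow of convex functionals on a Hilbert space, applied to $\Vcal$ on $H := L^2(\Gcal; \R^d)$. By Lemma~\ref{lem:Vcal}, $\Vcal$ is convex, continuous and Fr\'echet differentiable on $H$. In particular, $\Vcal$ is proper, convex and lower semicontinuous, its subdifferential $\partial \Vcal$ is single-valued and equal to $\{D_Z \Vcal\}$, and the operator $Z \mapsto D_Z\Vcal$ is maximal monotone on $H$. Hence we are in the standard setting of Brezis' theory of evolution equations governed by subdifferentials of convex functionals.

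First, I would appeal to Brezis' existence theorem (see e.g.\ Th\'eor\`eme~3.1 in Brezis, \emph{Op\'erateurs maximaux monotones}): for every initial condition $Z_0 \in H$ there is a unique absolutely continuous curve $(Z_t)_{t \ge 0}$ in $H$ with $Z_0$ as initial value, such that $t \mapsto \Vcal(Z_t)$ is absolutely continuous and convex, $\frac{d}{dt} Z_t = -D_{Z_t}\Vcal$ holds for a.e.\ $t>0$, and moreover the right derivative $\frac{d^+}{dt} Z_t$ exists for every $t>0$ and coincides with $-D_{Z_t}\Vcal$ (this is the regularizing effect of the flow for subdifferentials of convex functionals). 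This yields \eqref{eq:Brezis_Bruck}.

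Second, for the asymptotic behaviour, I would use Lemma~\ref{lem:existence_minimizer}, which under Assumption (A) guarantees that $\mathop{\mathrm{argmin}} \Vcal \neq \emptyset$. This is precisely the hypothesis needed to apply Bruck's theorem on weak asymptotic convergence (Bruck, 1975, \emph{Asymptotic convergence of nonlinear contraction semigroups in Hilbert space}), which asserts that any orbit of the gradient flow of a convex lsc functional admitting minimizers converges weakly in $H$ to some element $Z^\star$ of $\mathop{\mathrm{argmin}} \Vcal$. Applying this to our curve yields $Z_t \rightharpoonup Z^\star$ weakly in $L^2$, with $Z^\star$ a minimizer of $\Vcal$.

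Finally, for the convergence $\Vcal(Z_t) \to \Vcal(Z^\star)$, I would use the standard energy dissipation identity: differentiating along the flow gives $\frac{d}{dt}\Vcal(Z_t) = -\|D_{Z_t}\Vcal\|_{L^2}^2 \le 0$, so $t\mapsto \Vcal(Z_t)$ is nonincreasing and hence admits a limit $\ell \ge \Vcal(Z^\star) = \inf \Vcal$. Weak lower semicontinuity of $\Vcal$ from Lemma~\ref{lem:Vcal}\ref{it:lem.Fprops.convex}, combined with weak convergence $Z_t \rightharpoonup Z^\star$, yields $\Vcal(Z^\star) \le \liminf_t \Vcal(Z_t) = \ell$, so $\ell = \Vcal(Z^\star)$. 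The main obstacle is not in the argument itself, which is a direct packaging of Brezis--Bruck, but in ensuring the hypotheses are met: the existence of a minimizer of $\Vcal$ in $L^2$ is what Assumption (A) buys us through Lemma~\ref{lem:existence_minimizer}, and without it Bruck's theorem is not directly applicable.
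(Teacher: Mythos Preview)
Your first two steps---existence via Brezis' theorem and weak convergence via Bruck---match the paper exactly. The gap is in your final step, the convergence $\Vcal(Z_t)\to\Vcal(Z^\star)$.

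You argue that $t\mapsto\Vcal(Z_t)$ is nonincreasing with limit $\ell$, that $\ell\ge\Vcal(Z^\star)$ since $Z^\star$ is a minimizer, and then invoke weak lower semicontinuity together with $Z_t\rightharpoonup Z^\star$ to get $\Vcal(Z^\star)\le\liminf_t\Vcal(Z_t)=\ell$, concluding $\ell=\Vcal(Z^\star)$. But both inequalities you have written point in the \emph{same} direction: each says $\Vcal(Z^\star)\le\ell$. Weak lower semicontinuity does not give you an upper bound on $\ell$; it only confirms what you already knew from minimality. So the equality $\ell=\Vcal(Z^\star)$ does not follow, and this step is genuinely incomplete.

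The paper closes this gap differently. It invokes a further result of Brezis (Theorem~3.7 in \emph{Op\'erateurs maximaux monotones}) stating that $\big\|\tfrac{d^+}{dt}Z_t\big\|_{L^2}=\|D_{Z_t}\Vcal\|_{L^2}\to 0$, and then uses the subgradient inequality of convexity,
\[
\Vcal(Z^\star)-\Vcal(Z_t)\;\ge\;\langle D_{Z_t}\Vcal,\,Z^\star-Z_t\rangle_{L^2},
\]
together with the $L^2$-boundedness of $(Z_t)_{t\ge0}$ (a consequence of weak convergence) to force the right-hand side to zero. This yields $\limsup_t\Vcal(Z_t)\le\Vcal(Z^\star)$, which is exactly the missing inequality.
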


We have already seen in Lemma \ref{lem:Brezis_Bruck} that, under Assumption (A), for any initial datum $Z_0 \in L^2(\Gcal;\R^d)$ the $L^2$-gradient flow $(Z_t)_{t \ge 0}$ of $\Vcal$ exists and converges weakly to a minimizer of $\Vcal$, which exists by Lemma \ref{lem:existence_minimizer}.
But even beyond that, Assumption (A) is sufficiently strong to ensure strong convergence of the gradient flow.
We now state the main result of this section.

\begin{theorem}\label{thm:convergence}
    Under Assumption (A), let $(Z_t)_{t \ge 0}$ be an $L^2$-gradient flow of $\Vcal$.
    Then $(Z_t)_{t \ge 0}$ converges strongly to the unique minimizer $Z^\star$ of $\Vcal$ with $\mathbb E[Z^\star] = \mathbb E[Z_0]$.
\end{theorem}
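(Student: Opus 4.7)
The strategy is to upgrade the weak $L^2$-convergence of Lemma \ref{lem:Brezis_Bruck} to strong convergence. First I would identify the weak limit: Lemma \ref{lem:Brezis_Bruck} furnishes $Z_t \rightharpoonup Z^\sharp$ weakly in $L^2$ for \emph{some} minimizer $Z^\sharp$ of $\Vcal$; since $t \mapsto \mathbb E[Z_t]$ is constant (mean-conservation lemma proved at the end of the previous section) and weak convergence preserves means, $\mathbb E[Z^\sharp] = \mathbb E[Z_0]$, so Lemma \ref{lem:Vcal}\ref{it:lem.Fprops.minimizer} forces $Z^\sharp = Z^\star$. (Note that every minimizer of $\Vcal$ is automatically $(X,\cdot)$-monotone, since $\Vcal(Z) \ge \Functional(\Law(Z))$ with equality characterizing monotonicity, so the uniqueness clause of Lemma \ref{lem:Vcal}\ref{it:lem.Fprops.minimizer} applies.) A standard gradient-flow computation using convexity of $\Vcal$ yields
\[
    \frac{d}{dt}\tfrac{1}{2}\|Z_t - Z^\star\|_{L^2}^2 = -\langle D_{Z_t}\Vcal, Z_t - Z^\star\rangle \le -(\Vcal(Z_t) - \Vcal(Z^\star)) \le 0,
\]
so $\ell := \lim_{t \to \infty}\|Z_t - Z^\star\|_{L^2}$ exists, and the remaining task is to show $\ell = 0$.

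I would next reduce to the case $Z_0 \in L^\infty$ using the $1$-Lipschitz contraction of $L^2$-gradient flows of convex functionals: approximating $Z_0$ by bounded $Z_0^n$ in $L^2$, the respective flows satisfy $\|Z_t^n - Z_t\|_{L^2} \le \|Z_0^n - Z_0\|_{L^2}$ for all $t$; since $\bary(\mu) = \bary(\nu) = 0$, the functional $\Vcal$ is translation invariant under constants, so the minimizer $Z^{\star,n}$ with mean $\mathbb E[Z_0^n]$ equals $Z^\star + (\mathbb E[Z_0^n] - \mathbb E[Z_0])$ and converges to $Z^\star$ in $L^2$; an $\varepsilon/3$ argument then transfers strong convergence from bounded to general initial data. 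Assume now $Z_0 \in L^\infty$. The main technical step is to show that the orbit $(Z_t)$ is uniformly bounded in $L^\infty$. The mechanism is a confinement induced by Assumption (A3): for $|Z|$ large, $(\nabla v \ast \gamma_1)(Z)$ concentrates near the boundary of $C = \text{co}(\supp \nu)$, which strictly dominates $C_\mu$ (compactly contained in $I$ by (A3)), so the drift $-D_Z\Vcal = X - (\nabla v \ast \gamma_1)(Z)$ pulls $Z_t$ back toward the interior, yielding a Lyapunov bound on $|Z_t|^2$.

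With $L^\infty$-boundedness of the orbit in hand, I would conclude via a strict-convexity estimate for $\Vcal$ on the slice $\{Z \in L^\infty : \|Z\|_{L^\infty} \le M,\ \mathbb E[Z] = \mathbb E[Z_0]\}$: uniqueness of Brenier maps, together with the absolute continuity of $\Law(Z + \Gamma)$, should yield a modulus $\omega > 0$ vanishing only at zero with $\Vcal(Z) - \Vcal(Z^\star) \ge \omega(\|Z - Z^\star\|_{L^2})$ on this slice. Since $\Vcal(Z_t) \to \Vcal(Z^\star)$ by Lemma \ref{lem:Brezis_Bruck} and $(Z_t)$ eventually lies in the slice, $\ell = 0$ follows. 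The hardest part is simultaneously establishing (i) the $L^\infty$-confinement of the orbit under (A3), which requires quantifying how $\nabla v \ast \gamma_1$ approaches the boundary of $C$ at infinity and critically fails if (A3) is dropped (cf.\ \cite[Example 6.7]{BBST23}), and (ii) the strict-convexity estimate on bounded slices, whose proof likely requires the second-order structure of $\Vcal$ that appears to be developed elsewhere in the paper.
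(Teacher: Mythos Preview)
Your opening paragraph (identifying the weak limit with $Z^\star$ via mean conservation and Lemma~\ref{lem:Vcal}\ref{it:lem.Fprops.minimizer}) and your recognition that confinement of the orbit is the key technical input are both correct and match the paper. However, your final step has a genuine gap, and your reduction to bounded initial data is unnecessary.

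The gap is the strict-convexity modulus $\omega$ on bounded $L^\infty$-slices. You write that ``uniqueness of Brenier maps \dots\ should yield a modulus'' and that this ``likely requires the second-order structure''. In the paper this second-order structure (Lemma~\ref{lem:second derivative}, Lemma~\ref{lem:strong_conv}) is only developed for $d=1$ and under the stronger Assumption~(A$'$); for general $d$ under Assumption~(A) alone no such modulus is established, and indeed the authors explicitly leave the analogue of Theorem~\ref{thm:second_main} in higher dimensions as a conjecture. So as written, your route does not close for general $d$.

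The paper's actual argument bypasses strict convexity entirely and is more elementary. Proposition~\ref{prop:boundedness} already gives uniform integrability of $(|Z_t|^2)_{t\ge 0}$ for \emph{arbitrary} $Z_0\in L^2$ (no reduction to bounded $Z_0$ is needed). This makes $\{\Law(Z_t):t\ge 0\}$ precompact in $\Wcal_2$. Along any subsequence with $\Law(Z_{t_n})\to\alpha$ in $\Wcal_2$, the chain
\[
\Functional(\Law(Z^\star))=\Vcal(Z^\star)=\lim_n \Vcal(Z_{t_n})\ge \lim_n \Functional(\Law(Z_{t_n}))=\Functional(\alpha)
\]
(using Lemma~\ref{lem:Vcal}\ref{it:lem.Fprops.monotone} and Lemma~\ref{lem:Brezis_Bruck}) forces $\alpha=\Law(Z^\star)$. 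Hence $\mathbb{E}[|Z_t|^2]\to\mathbb{E}[|Z^\star|^2]$, which together with weak convergence yields strong $L^2$-convergence. This law-level compactness argument is what you are missing: once you have uniform integrability of $|Z_t|^2$, you do not need any quantitative convexity to upgrade weak to strong convergence.
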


For the purpose of Proposition \ref{prop:boundedness}, which is key in showing Theorem \ref{thm:convergence}, we need to pass to a pathwise absolutely continuous version of the gradient flow:

Fix a representative $\tilde Z_0$ of $Z_0$ and, for fixed $\omega$, consider the curve $\tilde Z = (\tilde Z_t)_{t \ge 0}$ with
\begin{equation}
    \label{eq:modification}
    \tilde Z_t := \tilde Z_0 - \int_0^t D_{Z_s} \Vcal \, ds,
\end{equation}
which is $\mathbb P$-almost surely well-defined since $|D_{Z_s} \Vcal|$ is uniformly bounded.
Consequently, $\tilde Z_t$ is a representative of $Z_t$ with absolutely continuous paths having pathwise the derivative
\begin{equation}
    \label{eq:modification.derivative}
    \frac{d\tilde Z_t}{dt} = - D_{Z_t} \Vcal = - D_{\tilde Z_t} \Vcal. 
\end{equation}

\begin{proposition}\label{prop:boundedness}
    Under Assumption (A) let $(Z_t)_{t \ge 0}$ be an $L^2$-gradient flow of $\Vcal$ and $(\tilde Z_t)_{t \ge 0}$ the representative given in \eqref{eq:modification}.
    Then there is $M > 0$ such that the random time $\tau := \inf \{ t > 0 : |\tilde Z_t| \le M \}$ is a.s.\ finite and satisfies almost surely
    \begin{enumerate}[label = (\roman*)]
        \item $t \mapsto |\tilde Z_t|$ is strictly decreasing for $t \in [0,\tau)$;
        \item $|\tilde Z_t| \le M$ for all $t \ge \tau$.
    \end{enumerate}
    In particular, $(|Z_t|^2)_{t \ge 0}$ is uniformly integrable.
\end{proposition}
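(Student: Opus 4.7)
Since $\frac{d\tilde Z_t}{dt} = X - T_t(\tilde Z_t)$ with $T_t := \nabla v_t \ast \gamma_1$ taking values in $C$ (by Jensen, as $\nabla v_t \in C$ a.e.), the quantity to control is
\[
\tfrac{1}{2}\tfrac{d}{dt}|\tilde Z_t|^2 = \tilde Z_t\cdot(X - T_t(\tilde Z_t)).
\]
The geometric input from \ref{it:A3} is the following: writing $h_K(\hat z):=\sup_{y\in K}y\cdot\hat z$ for a compact set $K$, the inclusion $C_\mu\subset\subset\mathrm{int}(C)$ together with compactness of $S^{d-1}$ and continuity of support functions yields a uniform gap
\[
\delta := \tfrac{1}{2}\min_{\hat z\in S^{d-1}}\bigl(h_C(\hat z)-h_{C_\mu}(\hat z)\bigr)>0.
\]
The strategy is then to establish a uniform-in-$t$ asymptotic for $T_t$ at infinity and combine it with this gap to produce a pathwise dissipation inequality.

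\textbf{Uniform asymptotics for $T_t$.} I plan to show that for every $\epsilon>0$ there exists $M_\epsilon$ with $T_t(z)\cdot z/|z|\ge h_C(z/|z|)-\epsilon$ for all $t\ge 0$ and all $|z|\ge M_\epsilon$. The ingredients are: (a) Lemma~\ref{lem:Brezis_Bruck} gives weak-$L^2$ convergence, hence $\Lambda:=\sup_t\E[|Z_t|^2]<\infty$, and so the measures $\rho_t:=\Law(Z_t+\Gamma)$ have second moments uniformly bounded by $\Lambda+d$; (b) for every $\hat z\in S^{d-1}$ the exposed face $\{y\in C:y\cdot\hat z=h_C(\hat z)\}$ contains an extreme point of $C$, which lies in $\supp\nu$, and a Fatou/lower-semicontinuity argument over the compact sphere yields the uniform positivity $q_\ast(\eta):=\inf_{\hat z}\nu(\{y\cdot\hat z>h_C(\hat z)-\eta\})>0$ for each $\eta>0$; (c) since the $\rho_t$-mass of $(\nabla v_t)^{-1}(\{y\cdot\hat z>h_C(\hat z)-\eta\})$ equals $\nu(\{y\cdot\hat z>h_C(\hat z)-\eta\})\ge q_\ast(\eta)$, Chebyshev with the uniform second-moment bound produces a radius $R=R(\eta)$, independent of $t$, and a seed $\bar z_t(\hat z)\in B_R(0)$ with $\nabla v_t(\bar z_t(\hat z))\cdot\hat z\ge h_C(\hat z)-\eta$; (d) Brenier monotonicity applied to the pair $(w,\bar z_t(\hat z))$, decomposed along and orthogonal to $\hat z$, together with $|\nabla v_t|\le L_C:=\sup_{y\in C}|y|$, forces the pointwise estimate
\[
\nabla v_t(w)\cdot\hat z\ge h_C(\hat z)-\eta-\frac{2L_C\,|w_\perp-\bar z_{t,\perp}|}{w\cdot\hat z-\bar z_t\cdot\hat z}
\]
whenever $w\cdot\hat z>\bar z_t\cdot\hat z$. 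Applying this with $w=z+\Gamma$ and $z=|z|\hat z$, splitting over the event $\{\Gamma\cdot\hat z\ge -|z|/2\}$ and its Gaussian-tail complement, and integrating against $\gamma_1$, upgrades this to $T_t(z)\cdot\hat z\ge h_C(\hat z)-2\eta$ for $|z|$ large depending only on $\eta,L_C,R(\eta),d$, not on $t$.

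\textbf{Pathwise conclusion and uniform integrability.} With $\epsilon=\delta$ and $M:=M_\delta$, for $|\tilde Z_t|\ge M$ the asymptotic estimate and the geometric gap combine to give
\[
\tilde Z_t\cdot(T_t(\tilde Z_t)-X)\ge |\tilde Z_t|\bigl(h_C(\tilde Z_t/|\tilde Z_t|)-\delta-h_{C_\mu}(\tilde Z_t/|\tilde Z_t|)\bigr)\ge \delta|\tilde Z_t|\ge \delta M,
\]
whence $\frac{d}{dt}|\tilde Z_t|^2\le -2\delta M<0$. This shows (i) strict decrease of $|\tilde Z_t|$ on $[0,\tau)$ and (ii) $\tau\le (|\tilde Z_0|^2-M^2)^+/(2\delta M)$, hence $\tau$ is a.s.\ finite; at $t=\tau$ one has $|\tilde Z_\tau|=M$ and the inward sign of the derivative on the sphere $\{|z|=M\}$ forbids $|\tilde Z_t|$ from exceeding $M$ afterwards. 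Consequently $|Z_t|^2\le |\tilde Z_0|^2+M^2\in L^1$ dominates the family uniformly, giving uniform integrability. The main obstacle in this plan is the uniform-in-$t$ asymptotic estimate, specifically steps (c) and (d): without the weak-$L^2$ convergence from Lemma~\ref{lem:Brezis_Bruck} the seed $\bar z_t(\hat z)$ could escape every fixed ball and the rate $T_t(z)\cdot\hat z\to h_C(\hat z)$ could degenerate in $t$, leaving room for $|\tilde Z_t|$ to drift to infinity along the flow.
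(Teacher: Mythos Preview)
Your proposal is correct and follows essentially the same route as the paper: the paper likewise uses the weak-$L^2$ convergence from Lemma~\ref{lem:Brezis_Bruck} to obtain a uniform second-moment bound (tightness of $\{\Law(Z_t)\ast\gamma_1\}$), then combines the uniform slice mass $\inf_{\hat z}\nu(S_{\hat z,\eta})>0$ with cyclical monotonicity of the Brenier maps to show $\nabla v_t\ast\gamma_1(z)\in S_{z/|z|,\delta}$ for $|z|$ large uniformly in $t$, and finally exploits the gap $\Delta>0$ from $C_\mu\subset\subset I$ to get the pathwise dissipation $\frac{d}{dt}|\tilde Z_t|^2\le -2|\tilde Z_t|(\Delta-\delta)<0$ whenever $|\tilde Z_t|\ge M$. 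The only organizational difference is that the paper isolates the geometric steps as stand-alone lemmas (Lemmas~\ref{lem:S}, \ref{lem:Brenier boundary}, \ref{lem:Brenier boundary_2}, \ref{lem:Brenier boundary mu}) for an arbitrary tight family $\Lambda$, whereas you inline them directly into the proof.
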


% \begin{proof}
%     Take $0<\delta<\Delta$, which is possible by Lemma \ref{lem:shpere_quant}. Let $R>\max\{diam(supp(Z_0))\}$, with $M$ as in Lemma \ref{lem:Walter_Wien_2}. Denote $N_t:=\frac{1}{2}|Z_t|^2$, so
%     $$\dot N_t=Z_t\cdot \dot Z_t = - Z_t\cdot\{ \nabla v_t\ast\gamma(Z_t) -X\},$$
%     where $\nabla v_t(Z_t+\Gamma)\sim\nu$ is the Brenier map. Calling $A:=\frac{1}{2}R^2$, we obtain from Lemma \ref{lem:Walter_Wien_3} that
%     $$\dot N_t\leq k_1{\bf 1}_{\{N_t<A\}} - k_2{\bf 1}_{\{N_t\geq A\}},$$
%     with $k_1\geq R \, diam(C)$ and $k_2\geq R(\Delta -\delta)>0$.

%     We now show the statement for $K:=R$. We argue by contradiction.
%     Suppose that $t_1\in \mathbb R_+$ exists, so that $N_{t_1}\geq A+h$, some $h>0$. Then define $t_0=\sup\{t<t_1:N_t\leq A\}$. Observe that this supremum is attained, that $N_{t_0}=A$, and that $t_0<t_1$, by continuity. But then $h\leq N_{t_1}-N_{t_0}\leq -k_2(t_1-t_0)<0$, a contradiction. Thus $N_t\leq A$, equiv.\ $|Z_t|\leq R$.
% \end{proof}

The proofs of Theorem \ref{thm:convergence} and Proposition \ref{prop:boundedness} need some preparations and will be given at the end of this section.

\subsection{Geometric auxiliary results}

As it turns out, under Assumption (A) and assuming that the starting variable $Z_0$ is in $L^\infty(\Gcal;\R^d)$, one can show that the $L^2$-Bass gradient flow is confined to a bounded subset of $\R^d$, which eventually allows us to establish its convergence in the norm of $L^2$.
Therefore, we need to control the gradient of $\Vcal$, which we do by analysing properties of the Brenier map $\nabla v : \R^d \to \R^d$ that pushes a measure $\beta \in \Pcal_2(\R^d)$ with $\beta \ll \lambda$ to $\nu$.
To this end, we consider the support function $\ell : \R^d \to \R$ of $C$ given by
\begin{equation}
    \label{eq:ell}
    \ell(a) := \max \{ y \cdot a : y \in C \}.
\end{equation}
We define the slice $S_{a,\delta}$ of $C$ consisting of points in $C$ that are at most $\delta > 0$ away from the supporting hyperplane with normal direction $a$, that is,
\begin{equation}
    \label{eq:segment}
    S_{a,\delta} := \{ y \in C : y \cdot a > \ell(a) - \delta \}.
\end{equation}
Clearly, $\ell$ is Lipschitz continuous on the $(d-1)$-sphere $S^{d-1}$. % with Lipschitz constant equal to $\max \{ |y| : y \in C \}$, and we have
For the next lemma, we recall that we have assumed (w.l.o.g.) that $C$ spans $\R^d$.
\begin{lemma} \label{lem:S}
    Let $\delta > 0$ and $r \ge 0$. Under Assumption (A), we have
    \begin{enumerate}[label = (\roman*)]
        \item \label{it:S.1} there is $\epsilon = \epsilon(\delta,C) > 0$ such that for all $a,b \in S^{d-1}$
        \begin{equation}
            \label{eq:S.1}
            |a - b| \le \epsilon \implies S_{a,\delta / 2} \subseteq S_{b,\delta};
        \end{equation}
        \item \label{it:S.2} for $a \in S^{d-1}$ the set $\{ y \in C : \exists y' \in S_{a,\delta} \text{ with }|y - y'| \le r \}$ is contained in $S_{a,\delta + r}$;
        \item \label{it:S.3} the $\nu$-measure of each $\delta$-slice is uniformly bounded away from zero, i.e., 
        \[ \eta(\delta) := \inf \{ \nu(S_{a,\delta}) : a \in S^{d-1} \} > 0; \]
        \item \label{it:S.4} we have %for $\delta$ sufficiently small the sets $C_\mu$ and $S_{a,\delta}$ are disjoint for all $a \in S^{d-1}$, in other words,
        \begin{equation}
            \label{eq:Delta}
            \Delta := \inf \{ \ell(a) - x \cdot a :  a \in S^{d-1}, x \in C_\mu \} > 0,
        \end{equation}
        so that, for $0 < \delta < \Delta$ the sets $C_\mu$ and $S_{a,\delta}$ are disjoint, for all $a \in S^{d-1}$.
    \end{enumerate}
\end{lemma}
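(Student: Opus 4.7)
My plan is to establish (i)--(iv) in sequence, using compactness of $C$, the Lipschitz regularity of the support function $\ell$, Milman's converse to Krein--Milman, and Assumption \ref{it:A3}. Let $R := \max_{y \in C}|y|$, finite by \ref{it:A2}; the max-formula for $\ell$ immediately shows $\ell$ is $R$-Lipschitz on $\R^d$.

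For (i), I would pick $\epsilon := \delta/(4R)$. Given $y \in S_{a,\delta/2}$ and $|a-b| \le \epsilon$, Cauchy--Schwarz combined with Lipschitzness of $\ell$ yields
\[
y \cdot b = y \cdot a + y \cdot (b-a) > \ell(a) - \tfrac{\delta}{2} - R\,|a-b| \ge \ell(b) - \tfrac{\delta}{2} - 2R\,|a-b| \ge \ell(b) - \delta,
\]
so $y \in S_{b,\delta}$. Part (ii) is similar but easier: the defining inequality for $y' \in S_{a,\delta}$ combined with $|y - y'| \le r$ gives $y \cdot a \ge y' \cdot a - r > \ell(a) - \delta - r$.

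The core of (iii) will be showing $\nu(S_{a,\delta/2}) > 0$ for every $a \in S^{d-1}$. The linear function $y \mapsto y \cdot a$ attains its maximum $\ell(a)$ on $C$ at some extreme point $y_a^\star$ (Bauer's maximum principle). Because $C = \mathrm{co}(\supp\nu)$ with $\supp\nu$ compact by \ref{it:A2}, a Carath\'eodory argument combined with extremality of $y_a^\star$ forces $y_a^\star \in \supp\nu$. Since the slice $S_{a,\delta/2}$ is relatively open in $C$ and contains $y_a^\star$, it also contains a Euclidean ball around $y_a^\star$ intersected with $C$, which has strictly positive $\nu$-mass. To upgrade this pointwise positivity to the uniform bound $\eta(\delta) > 0$, I would argue by contradiction: if some sequence $(a_n)$ in $S^{d-1}$ satisfies $\nu(S_{a_n,\delta}) \to 0$, compactness of $S^{d-1}$ gives (up to a subsequence) $a_n \to a$, and part (i) shows $S_{a,\delta/2} \subseteq S_{a_n,\delta}$ for large $n$, yielding $0 = \lim_n \nu(S_{a_n,\delta}) \ge \nu(S_{a,\delta/2}) > 0$, a contradiction.

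For (iv), the map $(a,x) \mapsto \ell(a) - x \cdot a$ is continuous and nonnegative on $S^{d-1} \times C_\mu$, which is compact by \ref{it:A3}; hence $\Delta$ is attained. If $\Delta = 0$, the minimiser $(a^\star, x^\star)$ would satisfy $x^\star \cdot a^\star = \ell(a^\star)$, placing $x^\star$ on a supporting hyperplane of $C$, i.e.\ on $\partial C$, contradicting $C_\mu \subset I$ from \ref{it:A3}. Disjointness is then immediate: for $x \in C_\mu$ and $0 < \delta < \Delta$, $x \cdot a \le \ell(a) - \Delta < \ell(a) - \delta$. The principal obstacle is the extreme-point argument in (iii): once one knows each slice meets $\supp\nu$, positive mass is trivial, but pinpointing such a witness uniformly in $a$ is exactly what requires the Krein--Milman/Milman input and where the hypothesis $C = \mathrm{co}(\supp\nu)$ is essential.
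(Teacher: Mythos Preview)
Your proof is correct, and parts (i), (ii), and (iv) are essentially identical to the paper's argument (same choice of $\epsilon$, same chain of inequalities, same compactness-plus-boundary contradiction for $\Delta>0$).

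The one genuine difference is in (iii). The paper dispenses with the positivity $\nu(S_{a,\delta})>0$ in one line: the set $C\setminus S_{a,\delta}$ is closed and convex (it is $C$ intersected with a closed half-space) and strictly smaller than $C$, so it cannot contain $\supp\nu$, since otherwise $C=\mathrm{co}(\supp\nu)\subseteq C\setminus S_{a,\delta}$. You instead locate an explicit witness via Bauer's maximum principle and Milman's converse to Krein--Milman, showing that the maximiser $y_a^\star$ of $y\mapsto y\cdot a$ on $C$ is an extreme point of $C$ and hence lies in $\supp\nu$, so the relatively open slice around it carries mass. Both routes are valid; the paper's is shorter and avoids invoking extreme-point theory, while yours yields the mildly sharper conclusion that every slice in fact contains a point of $\supp\nu$ (indeed an extreme point of $C$). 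The subsequent compactness/contradiction step to pass from pointwise to uniform positivity is the same in both proofs.
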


\begin{proof}
Ad \ref{it:S.1}, let $a,b \in S^{d-1}$ and $\epsilon := \frac{\delta}{4L}$ where $L := \max \{ |y| : y \in C \}$. We claim that then \eqref{eq:S.1} holds.
Indeed, if $|a - b| \le \epsilon$ and $y \in S_{a,\delta/2}$ we have
\begin{align*}
    y \cdot b \ge y \cdot a - L | b - a | > \ell(a) - \frac{\delta}{2} - L | b - a | \ge \ell(b) - \frac{\delta}{2} - 2 L | b - a | \ge \ell(b) - \delta.
\end{align*}
Ad \ref{it:S.2}, let $a \in S^{d-1}$, $y \in C$ and $y' \in S_{a,\delta}$ with $|y - y'| \le r$. We merely observe that
\begin{align*}
    y \cdot a \ge y' \cdot a - r > \ell(y') - \delta - r.
\end{align*}
Ad \ref{it:S.3}, first observe that $\nu(S_{a,\delta}) > 0$ for all $a \in S^{d-1}$, since $C$ is the smallest closed convex set containing the support of $\nu$ and $C \setminus S_{a,\delta}$ is a closed convex subset of $C$.
We proceed by contradiction, that means, we assume by compactness that there is a sequence $(a_n)_{n \in \N}$ in $S^{d-1}$ converging to $a$ with $\nu(S_{a_n,\delta}) \to 0$.
By \ref{it:S.1} we have that whenever $|a_n - a| \le \epsilon$ we have $S_{a,\delta / 2} \subseteq S_{a_n,\delta}$ which means that
\[
    \liminf_{n \to \infty} \nu(S_{a_n,\delta}) \ge \nu(S_{a,\delta / 2}) > 0,
\]
yielding a contradiction, hence, $\eta(\delta) > 0$.

Ad \ref{it:S.4}, assume that $\Delta = 0$ which means by compactness that there is $a \in S^{d-1}$ and $x \in C_\mu$ with $\ell(a) - x \cdot a = 0$. Consequently, $x$ is a support point of $C$ with normal vector $a$ and thus, by \cite[Corollary 7.6]{BaCo11}, $x$ is in the boundary of $C$ which contradicts that $x \in C_\mu \subseteq I$.
 \end{proof}

Before continuing we would like to recall that as $\nu$ is compactly supported under Assumption (A), hence the Brenier map from any absolutely continuous $\beta \in \Pcal_1(\R^d)$ to $\nu$ exists.
Thus we can work in this section with probability measures having only finite first moments.

\begin{lemma}\label{lem:Brenier boundary}
    Let $\Lambda \subseteq \mathcal P_1(\R^d)$ be a tight family consisting of measures equivalent to the Lebesgue measure and write $\nabla v_\beta$ for the Brenier map from $\beta \in \Lambda$ to $\nu$.
    Under Assumption (A), for $\delta > 0$, there is a constant $M = M(\nu,\Lambda,\delta) > 0$ such that
    \[
        z \in \R^d, |z| \ge M \implies \forall \beta \in \Lambda, \nabla v_\beta(z) \in S_{\frac{z}{|z|}, \delta}.
    \]
\end{lemma}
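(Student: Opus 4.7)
The plan is to argue by cyclical monotonicity of the Brenier map, together with the uniform slice mass bounds of Lemma \ref{lem:S}\ref{it:S.3}, to derive a quantitative bound on $|z|$ whenever $\nabla v_\beta(z)$ falls outside $S_{a,\delta}$ with $a := z/|z|$. Proceeding by contradiction, I will suppose $|z| \geq M$ (with $M$ to be determined) and $y := \nabla v_\beta(z) \notin S_{a,\delta}$ for some $\beta \in \Lambda$, and show that this forces $|z|$ to be bounded in terms of $\nu$, $\Lambda$, and $\delta$ only.

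The key preparatory step is to produce a comparison point with controlled norm whose image under $\nabla v_\beta$ lies in the slightly thinner slice $S_{a,\delta/2}$. Lemma \ref{lem:S}\ref{it:S.3} furnishes $\eta := \eta(\delta/2) > 0$ with $\nu(S_{a,\delta/2}) \geq \eta$ uniformly in $a \in S^{d-1}$; tightness of $\Lambda$ then supplies a compact set $K \subset \R^d$ with $\beta(K^c) < \eta/2$ for every $\beta \in \Lambda$. Since $\nabla v_\beta$ pushes $\beta$ to $\nu$, the set $K \cap \nabla v_\beta^{-1}(S_{a,\delta/2})$ has $\beta$-measure at least $\eta/2 > 0$ and is therefore nonempty. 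I would pick any such $z'$ with $|z'| \leq R := \sup_{w\in K}|w|$ and $y' := \nabla v_\beta(z') \in S_{a,\delta/2}$.

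With this in hand, cyclical monotonicity gives $0 \leq (z-z')\cdot(y-y')$. Decomposing both vectors along $a$ and $a^\perp$, the right-hand side becomes $(|z| - z'\cdot a)\bigl((y-y')\cdot a\bigr) - z'_\perp \cdot (y-y')_\perp$. Since $y' \in S_{a,\delta/2}$ and $y \notin S_{a,\delta}$, the factor $(y-y')\cdot a$ is strictly below $-\delta/2$; the perpendicular contribution is bounded by $R \cdot \operatorname{diam}(C)$. For $M > R$, the monotonicity inequality collapses to $(|z| - R)\tfrac{\delta}{2} \leq R \operatorname{diam}(C)$, so any choice $M > R + 2R\operatorname{diam}(C)/\delta$ gives the required contradiction, and produces $M = M(\nu,\Lambda,\delta)$ as claimed.

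The main obstacle is the uniform control in $\beta \in \Lambda$: one needs a comparison point whose norm is bounded independently of $\beta$ and whose Brenier image lies in a prescribed slice. This is where Assumption (A) enters substantially, via the uniform lower bound on slice masses in Lemma \ref{lem:S}\ref{it:S.3} (which relies on $\nu$ being compactly supported) combined with the tightness hypothesis on $\Lambda$. Once these are in place, the monotonicity step is essentially a direct calculation. A minor technical point is that $\nabla v_\beta$ is defined only $\beta$-almost everywhere, but cyclical monotonicity holds for arbitrary selections from the subdifferential $\partial v_\beta$, so the conclusion is valid at every differentiability point of $v_\beta$.
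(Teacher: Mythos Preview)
Your proof is correct and follows essentially the same strategy as the paper's: both use the uniform slice-mass bound of Lemma~\ref{lem:S}\ref{it:S.3} together with tightness of $\Lambda$ to produce a comparison point $z'$ of bounded norm whose Brenier image lies in $S_{a,\delta/2}$, and then invoke cyclical monotonicity to bound $|z|$. The only cosmetic difference is that you use the inner-product form $(z-z')\cdot(y-y')\ge 0$ with an orthogonal decomposition along $a$, whereas the paper expands the equivalent squared-distance form $|y-z|^2+|y'-z'|^2\le|y'-z|^2+|y-z'|^2$; your route yields a slightly cleaner constant.
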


\begin{proof}
    Using Lemma \ref{lem:S} \ref{it:S.3} and tightness of $\Lambda$, there exists a radius $r > 0$ such that
    \begin{equation}
        \label{eq:beta_ineq}
        \inf_{\beta \in \Lambda} \beta(B(r)) > 1 - \eta(\delta / 2),
    \end{equation}
    where $B(r) := \{ y \in \R^d : |y| \le r \}$ is the centred ball with radius $r$.
    
    Fix $\beta \in \Lambda$ and $z \in \R^d$ with $|z| \ge \sup \{ |y| : y \in C \} =: L$, and write $a := \frac{z}{|z|} \in S^{d-1}$.
    We proceed to show that if $x := \nabla v_\beta(z) \notin S_{a,\delta}$ then $|z| \le M := 2 (r + L)^2 / \delta$.
    It follows from $(\nabla v_\beta)_\# \beta = \nu$ that
    \[
        \beta(\{ \tilde z \in \R^d : \nabla v_\beta(\tilde z) \notin S_{a,\delta/2} \} ) = 1 - \nu(S_{a,\delta/2}) \le 1 - \eta(\delta/2),
    \]
    whence, by \eqref{eq:beta_ineq} there exists $z' \in B(r)$ with $x' := \nabla v_\beta(z') \in S_{a,\delta/2}$.
    We have
    \begin{align} \label{eq:Brenier boundary.1}
        |z' - x| &\le r + L, \\ 
        \label{eq:Brenier boundary.2}
        |z - x| &\ge a \cdot (z - x) = |z| - x \cdot a \ge |z| - \ell(a) + \delta > 0,
    \end{align}
    where we used in \eqref{eq:Brenier boundary.2} that $|z| \ge L \ge \ell(a)$ and $x \notin S_{a,\delta}$.
    By cyclical monotonicity we must have
    \begin{align*}
        |x - z|^2 + |x' - z'|^2 \le |x' - z|^2 + |x - z'|^2,
    \end{align*}
    implying that
    \begin{align*}
        (|z| - \ell(a) + \delta)^2 &\le
        |z|^2 - 2 z \cdot (x') + |x'|^2 + (L+r)^2 \\
        &= |z|^2 - 2 |z| a \cdot x' + |x'|^2 + (L+r)^2 \\
        &\le |z|^2 - 2 |z| (\ell(a) - \delta/2) + |x'|^2 + (L+r)^2,
    \end{align*}
    where the first equality comes from \eqref{eq:Brenier boundary.1} and \eqref{eq:Brenier boundary.2} and the last is due to $x' \in S_{a,\delta/2}$.
    Rearranging the terms leads to
    \[
        |z| \delta + \delta^2 \le L^2 + (L+r)^2,
    \]
    from where we readily derive the claim.
\end{proof}

\begin{lemma}\label{lem:Brenier boundary_2}
    Under the assumptions of Lemma \ref{lem:Brenier boundary}, there is a (possibly larger) constant $M = M(\nu,\Lambda,\delta) > 0$ such that
    \begin{equation}
        \label{eq:Brenier boundary_2}
        z \in \R^d, |z| \ge M \implies \forall \beta \in \Lambda, \nabla v_\beta \ast \gamma_1(z) \in S_{\frac{z}{|z|},\delta}.
    \end{equation}
\end{lemma}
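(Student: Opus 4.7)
The plan is to reduce to Lemma \ref{lem:Brenier boundary} after writing the convolution as an average, then exploit the geometric continuity from Lemma \ref{lem:S}\ref{it:S.1} to transfer the pointwise containment under a nearby supporting direction, controlling the remainder by Gaussian tail bounds.

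First I would set up the parameters. Given $\delta > 0$, let $\delta_0 := \delta/2$, pick $\epsilon_0 = \epsilon(\delta_0,C) > 0$ from Lemma \ref{lem:S}\ref{it:S.1}, and invoke Lemma \ref{lem:Brenier boundary} with parameter $\delta_0/2$ to obtain a radius $M_1$ such that
\[ |z'| \ge M_1 \implies \nabla v_\beta(z') \in S_{z'/|z'|,\,\delta_0/2} \text{ for every } \beta \in \Lambda. \]
Set $L := \max\{|y| : y \in C\}$ and choose $R > 0$ so large that $2L\,\gamma_1(\{|W| > R\}) < \delta/2$. Finally choose $M := \max\bigl(M_1 + R,\, R(1 + 2/\epsilon_0)\bigr)$.

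Next, fix $z$ with $|z| \ge M$, write $a := z/|z|$, and consider the identity
\[ \nabla v_\beta \ast \gamma_1(z)\cdot a = \int_{\{|w|\le R\}} \nabla v_\beta(z+w)\cdot a\,\gamma_1(dw) + \int_{\{|w| > R\}} \nabla v_\beta(z+w)\cdot a\,\gamma_1(dw). \]
For $|w| \le R$, one checks that $|z+w| \ge M_1$ and
\[ \left| \tfrac{z+w}{|z+w|} - a \right| \le \frac{2|w|}{|z+w|} \le \frac{2R}{|z| - R} \le \epsilon_0, \]
so by the choice of $M_1$ together with Lemma \ref{lem:S}\ref{it:S.1} (applied with the roles of $a$ and $b$ swapped, with $b = (z+w)/|z+w|$), $\nabla v_\beta(z+w) \in S_{a,\delta_0}$; in particular $\nabla v_\beta(z+w) \cdot a > \ell(a) - \delta_0$. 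For $|w| > R$ we use only that $\nabla v_\beta(z+w) \in C$, hence $\nabla v_\beta(z+w) \cdot a \ge -L$ and $\ell(a) \le L$.

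Combining the two estimates and using $\gamma_1(\{|w|\le R\}) \le 1$ together with the tail bound on $R$ yields
\[ \nabla v_\beta \ast \gamma_1(z)\cdot a \ge \ell(a) - \delta_0 - 2L\,\gamma_1(\{|W|>R\}) > \ell(a) - \delta_0 - \delta/2 = \ell(a) - \delta. \]
Since $C$ is convex, $\nabla v_\beta \ast \gamma_1(z) \in C$ as a Gaussian average of values in $C$, and the bound above then gives $\nabla v_\beta \ast \gamma_1(z) \in S_{a,\delta}$, which is the claim. The only real subtlety is the double use of Lemma \ref{lem:S}\ref{it:S.1}: one must downgrade both $\delta$ (to absorb the Gaussian tail error) and the slice width (to absorb the angular perturbation from $a$ to $(z+w)/|z+w|$); once the hierarchy $\delta_0/2 < \delta_0 < \delta$ is chosen as above, the argument assembles cleanly.
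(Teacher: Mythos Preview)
Your proof is correct and follows essentially the same approach as the paper: split the convolution into a main part over a ball and a Gaussian tail, apply Lemma~\ref{lem:Brenier boundary} with a smaller slice width on the main part, use Lemma~\ref{lem:S}\ref{it:S.1} to transfer from the direction $(z+w)/|z+w|$ to $a=z/|z|$, and absorb the tail via $|\nabla v_\beta|\le L$. Your choice $M\ge M_1+R$ is in fact slightly cleaner than the paper's $M=\max(\tilde M,\,2r/\epsilon+r)$, since it makes the requirement $|z+w|\ge M_1$ on $\{|w|\le R\}$ explicit.
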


\begin{proof}
    By Lemma \ref{lem:Brenier boundary} there is $\tilde M > 0$ such that
    \begin{equation}
        \label{eq:Brenier boundary_2.1}
        z \in \R^d, |z| \ge \tilde M \implies \forall \beta \in \Lambda, \nabla v_\beta(z) \in S_{\frac{z}{|z|}, \delta / 4}.
    \end{equation}
    Further, by Lemma \ref{lem:S} \ref{it:S.1} there is $\epsilon > 0$ such that for all $a,b \in S^{d-1}$
    \begin{equation}
        \label{eq:Brenier boundary_2.2}
        |a-b| \le \epsilon \implies S_{a, \delta/4} \subseteq S_{b,\delta/2}.
    \end{equation}
    Next, we choose $r$ sufficiently large such that $\gamma_1(B(r)^c) \le \delta / (4L)$ and set 
    \[ M := \max\left(\tilde M, \frac{2r}{\epsilon} + r\right). \]
    It remains to show that $M$ has the property described in \eqref{eq:Brenier boundary_2}.
    Fix $z \in \R^d$ with $|z| \ge M$ and $\beta \in \Lambda$, and write $x := \nabla v_\beta \ast \gamma_1(z)$.
    We have
    \[
        x = \gamma_1(B(r)) \underbrace{\int_{ B(r) } \nabla v_\beta(z - y) \frac{\gamma_1(dy)}{\gamma_1(B(r))}}_{=: x_1} + \gamma_1(B(r)^c) \underbrace{\int_{ y \in B(r)^c } \nabla v_\beta(z - y) \frac{\gamma_1(dy}{\gamma_1(B(r)^c)}}_{=: x_2}.
    \]
    When $|y| \le r$ then $b = \frac{z}{|z|}$ and $a = \frac{z - y}{|z - y|}$ satisfy
    \begin{align*}
        |a - b| \le \left| \frac{z}{|z|} - \frac{z}{|z - y|} \right| + \frac{r}{|z - y|} \le \frac{| |z - y| - |z| |}{|z - y|} + \frac{r}{M - r} \le \frac{2r}{M - r} \le \epsilon.
    \end{align*}
    Therefore, \eqref{eq:Brenier boundary_2.1} and \eqref{eq:Brenier boundary_2.2} yield that in this case $\nabla v_\beta(z - y) \in S_{b,\delta / 2}$, hence, also $x_1 \in S_{b,\delta / 2}$ as the latter is a convex set.
    On the other hand, we have
    \[
        |x_2| \gamma_1(B(r)^c) \le L \gamma_1(B(r)^c) \le \frac{\delta}{4}.
    \]
    We claim that $x \in S_{b,\delta}$. Indeed,
    \begin{align*}
        x \cdot b &\ge \gamma_1(B(r)) (x_1 \cdot b) - \frac{\delta}{4} 
        \ge \left( \gamma_1(B(r)) x_1 + \gamma_1(B(r)^c) x_2 \right) \cdot b \\
        &> \left(1 - \frac{\delta}{4L} \right) \left(\ell(b) - \frac\delta2 \right) - \frac{\delta}{4}
        \ge \ell(b) - \frac{3\delta}{4} - \frac{\delta \ell(b)}{4L} \ge \ell(b) - \delta,
    \end{align*}
    where we used that $\ell(b) \le L$ and $x_1 \in S_{b,\delta / 2}$.
    This concludes the proof.
\end{proof}

\begin{lemma} \label{lem:Brenier boundary mu}
    Under the assumptions of Lemma \ref{lem:Brenier boundary}, let $M > 0$ satisfy \eqref{eq:Brenier boundary_2}. Then we have
    \begin{equation}
        \label{eq:Brenier boundary mu}
        x \in C_\mu, z \in \R^d, |z| \ge M \implies \forall \beta \in \Lambda, z \cdot (\nabla v_\beta \ast \gamma_1(z) - x) \ge |z| ( \Delta - \delta),
    \end{equation}
    where $\Delta$ is given as in \eqref{eq:Delta}.
\end{lemma}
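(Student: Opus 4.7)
The plan is to combine the two facts we have already assembled: Lemma \ref{lem:Brenier boundary_2} tells us that for $|z|\ge M$ the smoothed Brenier image $\nabla v_\beta\ast\gamma_1(z)$ lies in the slice $S_{z/|z|,\delta}$, and Lemma \ref{lem:S} \ref{it:S.4} tells us that every $x\in C_\mu$ sits at least distance $\Delta$ (measured along the normal $a$) below the supporting hyperplane of $C$ in direction $a$. Taking inner products with $z$ and subtracting turns this into exactly the claimed bound.

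More concretely, I would set $a:=z/|z|\in S^{d-1}$ and rewrite the inner product as
\[
z\cdot(\nabla v_\beta\ast\gamma_1(z)-x)=|z|\,\bigl(a\cdot\nabla v_\beta\ast\gamma_1(z)-a\cdot x\bigr).
\]
For the first term, Lemma \ref{lem:Brenier boundary_2} gives $\nabla v_\beta\ast\gamma_1(z)\in S_{a,\delta}$, i.e.\ $a\cdot\nabla v_\beta\ast\gamma_1(z)>\ell(a)-\delta$. For the second term, $x\in C_\mu$ combined with the definition \eqref{eq:Delta} of $\Delta$ yields $\ell(a)-a\cdot x\ge\Delta$, i.e.\ $a\cdot x\le\ell(a)-\Delta$. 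Plugging these two estimates in gives
\[
z\cdot(\nabla v_\beta\ast\gamma_1(z)-x)>|z|\bigl((\ell(a)-\delta)-(\ell(a)-\Delta)\bigr)=|z|(\Delta-\delta),
\]
which is the claim.

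All the real work has already been done in Lemmas \ref{lem:S} and \ref{lem:Brenier boundary_2}; there is no genuine obstacle here, only the bookkeeping of uniformity in $\beta\in\Lambda$ and in $x\in C_\mu$, which is automatic since the bound from Lemma \ref{lem:Brenier boundary_2} is uniform in $\beta$ and the constant $\Delta$ from \eqref{eq:Delta} is uniform in $x\in C_\mu$ and $a\in S^{d-1}$.
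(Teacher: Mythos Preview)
Your proof is correct and essentially identical to the paper's: both set $a=z/|z|$, factor out $|z|$, and then combine the slice bound $a\cdot\nabla v_\beta\ast\gamma_1(z)>\ell(a)-\delta$ from Lemma~\ref{lem:Brenier boundary_2} with the bound $a\cdot x\le\ell(a)-\Delta$ from \eqref{eq:Delta}.
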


\begin{proof}
    Let $x \in C_\mu$, $\beta \in \Lambda$ and $z \in R^d$ with $|z| \ge M$, and write $a = \frac{z}{|z|}$.
    \begin{align*}
        z \cdot \left( \nabla v_\beta \ast \gamma_1(z) - x \right) 
        &= |z| a \cdot \left( \nabla v_\beta \ast \gamma_1(z) - x \right) 
        \\
        &\ge
        |z| \left( a \cdot \nabla v_\beta \ast \gamma_1(z) - \ell(a) + \Delta \right) \\
        &\ge |z| (\Delta - \delta),
    \end{align*}
    where we use Lemma \ref{lem:S} \ref{it:S.4} for the first inequality and Lemma \ref{lem:Brenier boundary_2} for the second.
\end{proof}

\subsection{Postponed proofs} \label{ssec:proofs}

\begin{proof}[Proof of Lemma \ref{lem:existence_minimizer}]
    We have established in Item \ref{it:lem.Fprops.monotone} of Lemma \ref{lem:Vcal} that $\Vcal(Z) \ge \Functional(\Law(Z))$ for all $Z \in L^2(\mathcal G;\mathbb R^d)$ with equality if and only if the law of $(X,Z)$ is monotone.
    Hence, $Z$ minimizes $\Vcal$ if and only if $\Law(X,Z)$ is monotone and $\Law(Z)$ is a Bass measure.

    Since $\nu$ is compactly supported and $(\mu,\nu)$ is irreducible, there exists by \cite[Theorem 1.4]{BBST23} a convex function $v : \mathbb R^d \to \mathbb R$, $u := v \ast \gamma_1$ such that $\alpha := (\nabla u^\ast)_\# \mu$ is the Bass measure of the Bass martingale joining $\mu$ and $\nu$.
    Using Lemma \ref{lem:S} \ref{it:S.4}, we pick $\delta > 0$ such that $\delta < \Delta$. Then we have $C_\mu \cap S_{a,\delta} = \emptyset$ for all $ a \in S^{d-1}$.
    By Lemma \ref{lem:Brenier boundary_2} there is $M > 0$ such that
    \[ \{ z \in \R^d : |z| \ge M \} \subseteq \bigcup_{a \in S^{d-1}} \{ z \in \R^d : \nabla u(z) \in S_{a,\delta} \}. \]
    We find that $\{ z \in \R^d : \nabla u(z) \in C_\mu \} \subseteq \{ z \in \R^d : |z| < M \}$, from where we conclude that $\alpha$ is supported on $\{ z \in \R^d : |z| \le M \}$ and $\alpha \in \Pcal_2(\R^d)$.
    Hence, $Z = \nabla u^\ast(X) \in L^2(\mathcal G;\R^d)$ minimizes $\Vcal$.
    % By \cite{BBST23} the Bass measure of the Bass martingale from $\mu$ to $\nu$ is given by $\nabla u(\mu)$. 
    % The function  $\nabla u$ is smooth in the interior of $C$. Hence $\nabla u(C_\mu)$ is bounded, and the Bass measure has compact support. It follows by \cite{BaSchTsch23} that this Bass measure is a minimizer of $\Functional$.
\end{proof}

\begin{proof}[Proof of Lemma \ref{lem:Brezis_Bruck}]
    The existence of the gradient flow satisfying \eqref{eq:Brezis_Bruck} follows by \cite[Theorems 3.1]{Bre72}. 
    The weak convergence to $Z^\star$, a minimizer of $\Vcal$, is a consequence of \cite[Theorem 4]{Bru75}. 
    To check that $\Vcal(Z_t)\to \Vcal(Z^\star)$, we will use that $\|\frac{d^+}{dt}Z_t\|_{L^2} \to 0$, where the latter property is due to \cite[Theorem 3.7]{Bre72}, and argue by contradiction. Indeed, if $(t_n)_{n \in \N}$ is an increasing, non-negative sequence with $t_n\to\infty$ and $a>0$ exist such that $\Vcal(Z_{t_n})\geq a +\Vcal(Z^\star)$, then by convexity
    \[
        -a\geq \Vcal(Z^\star)-\Vcal(Z_{t_n}) \geq D_{Z_{t_n}} \Vcal \cdot( Z^\star - Z_{t_n})  =- \frac{d^+}{dt} Z_{t_n} \cdot( Z^\star - Z_{t_n})\to 0,
    \]
    for $n \to \infty$, as $(Z_t)_{t \ge 0}$ is $L^2$-bounded. This contradicts $a>0$.
\end{proof}

\begin{proof}[Proof of Proposition \ref{prop:boundedness}]
    Due to Lemma \ref{lem:S} \ref{it:S.4} we can pick $\delta > 0$ such that $\delta < \Delta$. We know by Lemma \ref{lem:Brezis_Bruck} that the gradient flow is weakly convergent to a minimizer of $\Vcal$.
    Therefore, $\sup_{t \ge 0} \mathbb E[ |Z_t|^2 ] < \infty$ and $\Lambda := \{ \Law(Z_t) \ast \gamma_1 : t \ge 0 \}$ is tight.
    Let $M > 0$ be the constant provided by Lemma \ref{lem:Brenier boundary mu}, then we have by \eqref{eq:Brenier boundary mu} that
    \[
        |\tilde Z_t| \ge M \implies \tilde Z_t \cdot (\nabla v_t \ast \gamma_1(\tilde Z_t) - X) \ge |\tilde Z_t| (\Delta - \delta),
    \]
    where $\nabla v_t : \R^d \to \R^d$ denotes the Brenier map from $\Law(Z_t) \ast \gamma_1$ to $\nu$.
    Recall that by Lemma \ref{lem:Vcal} we have $D_{\tilde Z_t} \Vcal = \nabla v_t \ast \gamma_1(\tilde Z_t) - X$.
    By \eqref{eq:modification.derivative} we have on a $\mathbb P$-full set $\tilde \Omega$, for almost every $t$
    \[
        |\tilde Z_t| \ge M \implies \frac12 \frac{d}{dt} | \tilde Z_t |^2 = - \tilde Z_t \cdot \left( \nabla v_t \ast \gamma_1(\tilde Z_t) - X \right) \leq - |\tilde Z_t| (\Delta - \delta) <  0,
    \]
    where we used for the last inequality that $\Delta - \delta > 0$.
    Hence, on $\{ t \ge 0 : t < \tau \}$ the curve $t \mapsto |\tilde Z_t|$ is strictly decreasing on $\tilde \Omega$. By the same argument, $\tau$ must be finite.
    On the other hand, let $\tau_1$ be a random time with $\tau_1 \ge \tau$ and $|\tilde Z_{\tau_1}| > M$ on $\{ \tau_1 > \tau \}$, then we have on $\{\tau_1 > \tau\} \cap \tilde \Omega$
    \[
        0 < |\tilde Z_{\tau_1}|^2 - |\tilde Z_\tau|^2 < -(\tau_1 - \tau) (\Delta - \delta)M,
    \]
    which yields that $\{\tau_1 > \tau\}$ is disjoint with $\tilde \Omega$.
    Consequently, we have shown that $|\tilde Z_t| \le M$ for all $t \ge \tau$ on $\tilde \Omega$, which completes the proof.
\end{proof}

\begin{proof}[Proof of Theorem \ref{thm:convergence}]
    By Proposition \ref{prop:boundedness} we have that $(|Z_t|^2)_{t \ge 0}$ is uniformly integrable.
    In turn, the family $\{ \Law(Z_t) : t \ge 0 \}$ is $\Wcal_2$-precompact which enables us to find an increasing, non-negative sequence $(t_n)_{n \in \N}$ with $t_n \to \infty$ such that $\Law(Z_{t_n}) \to \alpha$ in $\Wcal_2$.
    Using Lemma \ref{lem:Vcal} \ref{it:lem.Fprops.monotone} and Lemma \ref{lem:Brezis_Bruck} we have
    \[
        \Functional(\Law(Z^\star)) = \Vcal(Z^\star) = \lim_{n \to \infty} \Vcal (Z_{t_n}) \ge \lim_{n \to \infty} \Functional(\Law(Z_{t_n})) = \Functional(\alpha).
    \]
    Since $Z^\star$ is the unique minimizer of $\Vcal$ with mean $\mathbb E[Z_0]$ this shows that $\Law(Z^\star) = \alpha$.
    It follows immediately that
    \[
        \lim_{t \to \infty} \mathbb E[|Z_t|^2] = \mathbb E[|Z^\star|^2],
    \]
    which, together with weak convergence of $(Z_t)_{t \ge 0}$, yields $L^2$-norm convergence to $Z^\star$.
    % Let $\hat Z_t$ be distributed like $Z_t$ but be optimally coupled with $X$. By Lemmas \ref{lem:Vcal} and \ref{lem:Brezis_Bruck}, it follows that also $\Vcal(\hat Z_t)\to \Vcal(\hat Z)=\min \Vcal$. Denote by $\alpha_t$ the law of $Z_t$. By Proposition \ref{prop:boundedness}, $\{\alpha_t\}_t$ is concentrated in a compact set, hence $\{\alpha_t\}_t$ is relatively compact in $\mathcal P_2(\mathbb R^d)$ with the $\mathcal W_2$-metric. Letting $t_n\to\infty$ 
    % arbitrary, we can pass to a subsequence $t_{n_k}$ so $\alpha_{t_{n_k}}\to \alpha$ in $2$-Wasserstein distance, for some $\alpha$. As $\Vcal(\hat Z_{t_{n_k}})= \MCov(\alpha_{t_{n_k}}\ast\gamma,\nu)- \MCov(\alpha_{t_{n_k}},\mu)$, we derive that $$\Vcal(\hat Z_{t_{n_k}}) \to \MCov(\alpha\ast\gamma,\nu)- \MCov(\alpha,\mu),$$
    % thus $\MCov(\alpha\ast\gamma,\nu)- \MCov(\alpha,\mu)=\Vcal(\hat Z)$. Since $\hat Z$ is the unique minimizer (with mean 0), we conclude that $\hat Z\sim\alpha$. Then $$\mathbb E[| Z_{t_{n_k}}|^2] =\mathbb E[|\hat Z_{t_{n_k}}|^2]\to \int |x|^2\alpha(dx) = \mathbb E[|\hat Z|^2] .$$
    % Since also $Z_{t_{n_k}}\to\hat Z$ weakly  in $L^2$, we conclude the strong $L^2$ convergence.
\end{proof}

\section{The one-dimensional case}

Throughout this section, we assume that $d = 1$.

\subsection{Second-order analysis}

\label{sec:one-dim-Hess}

In this part we assume that $d=1$, although we will still often use vector and matrix notation to make this part consistent with the other ones.
Throughout this section, $\Phi_1$ denotes the standard Gaussian cumulative distribution function and $\phi_1:=\Phi_1'$ its density.

\begin{lemma}
    \label{lem:conditional}
    Let $\Delta Z \in L^1(\Gcal;\R^d)$, $Z$ be $\Gcal$-measurable. Then we have for $\gamma_1$-a.e.\ $\zeta$
    \begin{equation}
        \label{eq:conditional}
        \E[\Delta Z | Z + \Gamma = \zeta ] = \frac{\E[\phi_1(\zeta - Z) \Delta Z]}{\E[\phi_1(\zeta - Z)]}.
    \end{equation}
\end{lemma}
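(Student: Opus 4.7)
The plan is to exploit the independence of $\Gamma\sim\gamma_1$ from the $\sigma$-algebra $\Gcal$, which makes the joint law of $(Z,Z+\Gamma)$ explicit, and then to identify both sides of \eqref{eq:conditional} via the defining property of conditional expectation. Let me sketch this.

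First I would disintegrate: for an arbitrary bounded Borel test function $f\colon\R^d\to\R$, since $Z,\Delta Z$ are $\Gcal$-measurable and $\Gamma$ is independent of $\Gcal$, conditioning on $\Gcal$ and then making the change of variables $\zeta=Z+\gamma$ gives
\[
\E\bigl[f(Z+\Gamma)\,\Delta Z\bigr]
=\E\!\Bigl[\Delta Z\int f(Z+\gamma)\phi_1(\gamma)\,d\gamma\Bigr]
=\E\!\Bigl[\Delta Z\int f(\zeta)\phi_1(\zeta-Z)\,d\zeta\Bigr].
\]
Since $\phi_1$ is bounded and $\Delta Z\in L^1$, Fubini applies and the right-hand side equals $\int f(\zeta)\,\E[\phi_1(\zeta-Z)\Delta Z]\,d\zeta$. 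Specializing to $\Delta Z\equiv 1$ identifies the Lebesgue density of $Z+\Gamma$ as $p(\zeta):=\E[\phi_1(\zeta-Z)]$, which is strictly positive on $\R^d$ (and hence $\Law(Z+\Gamma)$ is equivalent to Lebesgue measure, and in particular to $\gamma_1$, on $\R^d$).

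Next I would invoke the tower property: by definition of the conditional expectation and the fact that $\sigma(Z+\Gamma)$-measurable functions are of the form $g(Z+\Gamma)$, one has
\[
\E\bigl[f(Z+\Gamma)\,\Delta Z\bigr]
=\E\bigl[f(Z+\Gamma)\,\E[\Delta Z\mid Z+\Gamma]\bigr]
=\int f(\zeta)\,\E[\Delta Z\mid Z+\Gamma=\zeta]\,p(\zeta)\,d\zeta.
\]
Combining the two displays, and letting $f$ vary over all bounded Borel functions, yields the pointwise identity
\[
p(\zeta)\,\E[\Delta Z\mid Z+\Gamma=\zeta]=\E[\phi_1(\zeta-Z)\Delta Z]
\]
for Lebesgue-a.e.\ $\zeta$. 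Dividing by $p(\zeta)>0$ gives \eqref{eq:conditional}, and the exceptional null set is negligible for $\gamma_1$ as well.

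There is no real obstacle here; the only point requiring a bit of care is the justification of Fubini in the first step, which is handled by noting that $\int\int\phi_1(\zeta-Z(\omega))\,|\Delta Z(\omega)|\,d\zeta\,\mathbb P(d\omega)=\|\Delta Z\|_{L^1}<\infty$.
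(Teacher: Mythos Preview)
Your proof is correct and follows essentially the same approach as the paper: both exploit independence of $\Gamma$ from $\Gcal$ to compute $\E[f(Z+\Gamma)\,\Delta Z]$ via Fubini, then invoke the defining property of conditional expectation. The only cosmetic difference is that the paper reconverts the $d\zeta$-integral back into an expectation against $h(Z+\Gamma)$ to appeal directly to uniqueness of conditional expectations, whereas you first isolate the density $p(\zeta)=\E[\phi_1(\zeta-Z)]$ and compare Lebesgue-a.e.; both routes are equivalent.
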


\begin{proof}
    Using independence of $\mathcal G$ and $\Gamma$, we compute for measurable, bounded $h \colon \R \to \R$ that
    \begin{align*}
        \E[\Delta Z h(Z + \Gamma)] 
        &= \E\left[ \Delta Z \E[ h(Z + \Gamma) | \mathcal G] \right]
        = \E\left[ \Delta Z \int h(\zeta) \phi_1(\zeta - Z) \, d\zeta \right]
        \\
        &= \int \left( \frac{\E[\Delta Z \phi_1(\zeta - Z)]}{\E[\phi_1(\zeta - Z)]} \right) h(\zeta) \E[\phi_1(\zeta - Z)] \, d\zeta
        \\
        &= \E \left[ \int \left( \frac{ \E[\Delta Z \phi_1(\zeta - Z)]}{\E[\phi_1(\zeta - Z)]} \right) h(\zeta) \phi_1(\zeta - Z) \, d\zeta \right] \\
        &= \E \left[ \left( \frac{ \E[\Delta Z \phi_1(\zeta - Z)]}{\E[\phi_1(\zeta - Z)]} \right)\Big|_{\zeta = Z + \Gamma} h(Z + \Gamma) \right].
    \end{align*}
    From this equality and uniqueness of conditional expectations, we readily derive \eqref{eq:conditional}.
\end{proof}

To proceed with our second-order analysis of $\Vcal$, we require additional regularity of $\nu$. \medskip

\noindent {\bf Assumptions (A'):} The pair $(\mu,\nu)$ satisfies Assumption (A), and in addition
\begin{enumerate}[label = (A\arabic*), start = 4]
    \item \label{it:A4} $\nu$ is absolutely continuous with $\lambda\text{-}\operatorname{ess\;inf}_{x \in I} \frac{d\nu}{d\lambda}(x) > 0$;
    \item \label{it:A5} the density of $\nu$ satisfies $\lambda\text{-}\operatorname{ess\;sup}_{x \in \R} \frac{d\nu}{d\lambda}(x) < \infty$.
\end{enumerate}

Using this assumption, we can compute the derivative of $D_Z \Vcal$ along continuously differentiable curves. Throughout we denote by $Hv$ the second derivative (Hessian) of $v$. 

\begin{proposition} \label{prop:derivative}
    Under Assumption (A'), let $(Z_t)_{t \ge 0}$ be a continuously differentiable curve in $L^2(\Gcal;\R^d)$ with derivative $G_t$.
    Then $t \mapsto D_{Z_t} \Vcal$ is continuously differentiable with
    \begin{equation}
        \label{eq:derivative}
        \frac{d}{dt} \left( D_{Z_t} \Vcal \right) = \E \left[ Hv_t(Z_t + \Gamma) \left( G_t - \E[G_t | Z_t + \Gamma]  \right) | \Gcal \right],
    \end{equation}
    where $\nabla v_t : \R \to \R$ is the Brenier map from $\Law(Z_t + \Gamma)$ to $\nu$.
\end{proposition}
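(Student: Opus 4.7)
The plan is to decompose the variation of $\nabla v_t(Z_t+\Gamma)$ into two effects and then take conditional expectation with respect to $\Gcal$. Since $D_{Z_t}\Vcal = \E[\nabla v_t(Z_t+\Gamma)\mid \Gcal] - X$ by Lemma~\ref{lem:Vcal}\ref{it:lem.Fprops.Frechet}, only the first summand is $t$-dependent. The variation of $\nabla v_t(Z_t+\Gamma)$ splits naturally into an \emph{Eulerian} part coming from $\nabla v_t$ itself moving (because the underlying measure $\beta_t := \Law(Z_t+\Gamma)$ changes with $t$) and a \emph{Lagrangian} part coming from the argument $Z_t+\Gamma$ moving along the flow. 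The Lagrangian part is simply the chain rule $Hv_t(Z_t+\Gamma) G_t$; the crux is to bring the Eulerian part into a form that combines cleanly with it after conditioning on $\Gcal$.

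For the Eulerian contribution I exploit the fact that $d=1$, so $\nabla v_t(\zeta) = F_\nu^{-1}(F_{\beta_t}(\zeta))$ is given by an explicit quantile formula. Writing $f_t(\zeta) := \E[\phi_1(\zeta - Z_t)]$ for the density of $\beta_t$, one has $\partial_\zeta F_{\beta_t}(\zeta) = f_t(\zeta)$ and, differentiating under the expectation,
\[
\partial_t F_{\beta_t}(\zeta) = -\E\bigl[\phi_1(\zeta - Z_t)\, G_t\bigr].
\]
Differentiating the identity $F_\nu(\nabla v_t(\zeta)) = F_{\beta_t}(\zeta)$ in $\zeta$ yields $Hv_t(\zeta) = f_t(\zeta)/\rho_\nu(\nabla v_t(\zeta))$, and differentiating it in $t$ gives
\[
\partial_t \nabla v_t(\zeta) = \frac{\partial_t F_{\beta_t}(\zeta)}{\rho_\nu(\nabla v_t(\zeta))} = - Hv_t(\zeta)\cdot \frac{\E[\phi_1(\zeta - Z_t)\, G_t]}{\E[\phi_1(\zeta - Z_t)]}.
\]
By Lemma~\ref{lem:conditional} the ratio on the right equals $\E[G_t\mid Z_t+\Gamma = \zeta]$. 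Adding the Lagrangian piece produces, pointwise,
\[
\frac{d}{dt}\nabla v_t(Z_t+\Gamma) = Hv_t(Z_t+\Gamma)\bigl(G_t - \E[G_t\mid Z_t+\Gamma]\bigr),
\]
and taking $\E[\cdot\mid \Gcal]$ yields exactly the stated formula \eqref{eq:derivative}.

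The main obstacle is the analytic justification of all the differentiations under integrals, and the upgrade from a pointwise/Gateaux statement to a $C^1$ $L^2$-curve. The key a priori bound is an $L^\infty$-bound on $Hv_t$ in the relevant region: Assumption~\ref{it:A4} bounds $\rho_\nu$ below by some $c>0$ on $I$, the Gaussian density is bounded above by $(2\pi)^{-1/2}$, so $Hv_t(\zeta) \le (2\pi)^{-1/2}/c$ whenever $\nabla v_t(\zeta) \in I$; the complement $\{\zeta : \nabla v_t(\zeta) \in \partial C\}$ is $\beta_t$-null by monotonicity of $\nabla v_t$ and because $\partial C$ is a two-point set in $d=1$, so it contributes nothing once one integrates against $\phi_1(\zeta - Z_t)$. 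With this bound in hand I would justify the $t$-differentiation by writing the finite difference
\[
\nabla v_{t+h}(\zeta)-\nabla v_t(\zeta) = F_\nu^{-1}(F_{\beta_{t+h}}(\zeta))-F_\nu^{-1}(F_{\beta_t}(\zeta))
\]
and applying dominated convergence using the uniform Lipschitzness of $F_\nu^{-1}$ on the range of $F_{\beta_t}$ (a consequence of~\ref{it:A4}) together with the $L^1$-continuity of $t\mapsto Z_t$. Finally, continuity in $t$ of the derived formula as an element of $L^2(\Gcal;\R^d)$ follows by one further dominated-convergence argument using the $L^2$-continuity of $G_t$ and the stability of $\nabla v_t$ (hence of $Hv_t$) in $t$, in the spirit of the stability results already invoked in the proof of Lemma~\ref{lem:Vcal}.
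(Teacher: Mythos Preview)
Your proposal is correct and follows essentially the same route as the paper: both write $\nabla v_t = Q_\nu\circ F_{\beta_t}$, differentiate this composition in $x$ and in $t$ to obtain $Hv_t$ and $\partial_t\nabla v_t = -Hv_t\,\E[G_t\mid Z_t+\Gamma=\cdot]$ via Lemma~\ref{lem:conditional}, use the lower bound on $\rho_\nu$ from~\ref{it:A4} to get a uniform $L^\infty$-bound on $Hv_t$ that justifies exchanging differentiation and integration, and then combine the two pieces under $\E[\cdot\mid\Gcal]$. Your Eulerian/Lagrangian framing and your explicit treatment of the $\beta_t$-null set $\{\nabla v_t\in\partial C\}$ are slightly more detailed than the paper's presentation, but the argument is the same.
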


\begin{proof}
    As we are currently working with $d = 1$, the Brenier map is explicitly given by $\nabla v_t = Q_\nu \circ F_t$ where $Q_\nu$ is the quantile function of $\nu$ and $F_t$ is the cdf of $Z_t$.
    It follows from \ref{it:A4} that $Q_\nu$ is almost surely differentiable on $(0,1)$.
    Consequently, $\frac{d}{dx} \nabla v_t(x)$ and $\frac{d}{dt} \nabla v_t(x)$ exist for $\lambda$-a.e.\ $x \in \R$.
    We compute the space and time derivatives of $\nabla v_t$, and obtain $\lambda$-a.s.\
    \begin{align}\label{eq:1dim Hessian}
        Hv_t = \frac{d}{dx} (Q_\nu \circ F_t) = \Big(\frac{d}{dx} Q_\nu\Big) \circ F_t \; \Big(\frac{d}{dx}F_t\Big) = \Big(\frac{d}{dx} Q_\nu\Big) \circ F_t \; \E[\phi_1(\cdot - Z_t)],   
    \end{align}
    as well as ($\gamma_1$-a.s.)
    \begin{align*}
        \frac{d}{dt} (\nabla v_t) &= \Big(\frac{d}{dx} Q_\nu\Big) \circ F_t \; \Big(\frac{d}{dt} F_t\Big) = - \Big(\frac{d}{dx} Q_\nu\Big) \circ F_t \;  \E[\phi_1(\cdot- Z_t)G_t] \\
        &= - Hv_t \; \E[G_t | Z_t + \Gamma = \cdot],
    \end{align*}
    where the last equality follows by Lemma \ref{lem:conditional}.
    As $\frac{d\nu}{dx}$ is $\lambda$-essentially bounded away from zero on $I$ by some constant $c > 0$, we have that
    \[
        \lambda\text{-}\esssup_{x \in \R} |Hv_t(x)| \le \frac1c < \infty \quad\text{and}\quad
        \lambda\text{-}\esssup_{x \in \R} 
        \Big|\frac{d}{dt} \nabla v_t(x)\Big| \le \frac1c \| G_t \|_{L^2} < \infty.
    \]
    Recall the form of $D_{Z_t} \Vcal$ proved in Lemma \ref{lem:Vcal}.
    Thanks to the bounds, we can exchange integration with differentiation and get
    \begin{align*}
        \frac{d}{dt} (D_{Z_t} \Vcal) &= \E\left[ \frac{d}{dt} \left( \nabla v_t(Z_t + \Gamma) \right) \Big| \Gcal \right] 
        = \E\left[ \Big( \frac{d}{dt} \nabla v_t\Big)(Z_t + \Gamma) + Hv_t(Z_t+\Gamma) G_t \Big| \Gcal \right]
        \\
        &= \E\left[ Hv_t(Z_t + \Gamma) \big(G_t - \E[G_t | Z_t + \Gamma]\big) \big| \Gcal \right].
    \end{align*}
    Note that the maps $t \mapsto Hv_t(Z_t + \Gamma)$ and $t \mapsto (\frac{d}{dt} \nabla v_t)(Z_t + \Gamma)$ are $L^2$-continuous, from where we conclude that $t \mapsto D_{Z_t} \Vcal$ is continuously differentiable.
\end{proof}

\begin{lemma} \label{lem:second derivative}
    Under Assumption (A'), let $(Z_t)_{t \in [0,1]}$ be a curve in $L^2(\Gcal;\R^d)$ with $Z_t = Z_0 + t \Delta Z$ where $\Delta Z := Z_1 - Z_0$.
    Then we have
    \begin{equation}
        \label{eq:second derivative}
        \frac{d^2}{dt^2} \Vcal(Z_t) = \E\left[ \left( \Delta Z - \E[\Delta Z | Z_t + \Gamma] \right) Hv_t(Z_t + \Gamma) \left( \Delta Z - \E[\Delta Z | Z_t + \Gamma] \right) \right],
    \end{equation}
    where $\nabla v_t$ is the Brenier map from $\Law(Z_t + \Gamma)$ to $\nu$.
\end{lemma}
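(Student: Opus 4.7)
The plan is to differentiate $\Vcal(Z_t)$ twice by the chain rule, invoking Lemma~\ref{lem:Vcal}\ref{it:lem.Fprops.Frechet} for the first derivative and Proposition~\ref{prop:derivative} for the derivative of the $L^2$-gradient, then symmetrize via conditioning on $\sigma(Z_t+\Gamma)$.

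First I would observe that the straight line $Z_t = Z_0 + t\Delta Z$ is continuously differentiable as a curve in $L^2(\Gcal;\R^d)$ with the constant derivative $G_t \equiv \Delta Z$, so Proposition~\ref{prop:derivative} applies and yields that $t \mapsto D_{Z_t}\Vcal$ is continuously differentiable in $L^2$. Combining this with Fr\'echet differentiability of $\Vcal$ gives
\[
\frac{d}{dt}\Vcal(Z_t) = \E[ D_{Z_t}\Vcal \cdot \Delta Z ],
\]
and since $\Delta Z \in L^2$ is fixed in $t$, differentiating once more through the inner product is justified and produces (writing $H_t := Hv_t(Z_t+\Gamma)$ and $A_t := \E[\Delta Z \mid Z_t+\Gamma]$)
\[
\frac{d^2}{dt^2}\Vcal(Z_t) = \E\bigl[ \Delta Z \cdot \E[ H_t(\Delta Z - A_t) \mid \Gcal ] \bigr] = \E\bigl[ \Delta Z \cdot H_t(\Delta Z - A_t) \bigr],
\]
where the last equality is just the tower property (the outer factor $\Delta Z$ is $\Gcal$-measurable).

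Next I would condition instead on $\sigma(Z_t+\Gamma)$. Both $H_t$ and $A_t$ are $\sigma(Z_t+\Gamma)$-measurable, so they pull out, and a direct computation gives
\[
\E[\Delta Z\,(\Delta Z - A_t) \mid Z_t+\Gamma] = \E[\Delta Z^2 \mid Z_t+\Gamma] - A_t^2 = \E\bigl[(\Delta Z - A_t)^2 \,\big|\, Z_t+\Gamma\bigr].
\]
Multiplying by $H_t$ and taking expectations yields
\[
\frac{d^2}{dt^2}\Vcal(Z_t) = \E\bigl[ H_t\, (\Delta Z - A_t)^2 \bigr],
\]
which is \eqref{eq:second derivative}. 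I do not foresee a genuine obstacle: the only subtlety is justifying the interchange of $\frac{d}{dt}$ with the $L^2$-inner product, and that is handed to us by the $L^2$-continuous differentiability of the gradient proved in Proposition~\ref{prop:derivative}. As a sanity check, since $v_t$ is convex, $H_t \ge 0$, so the right-hand side is nonnegative, consistent with convexity of $\Vcal$.
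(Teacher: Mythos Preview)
Your proof is correct and follows essentially the same approach as the paper: both invoke Proposition~\ref{prop:derivative} on the linear curve to compute $\frac{d^2}{dt^2}\Vcal(Z_t)=\E[\Delta Z\, Hv_t(Z_t+\Gamma)(\Delta Z-\E[\Delta Z\mid Z_t+\Gamma])]$ via the tower property, and then symmetrize. The only cosmetic difference is that the paper subtracts the vanishing term $\E[\E[\Delta Z\mid Z_t+\Gamma]\,Hv_t(Z_t+\Gamma)(\Delta Z-\E[\Delta Z\mid Z_t+\Gamma])]=0$ directly, whereas you obtain the same conclusion by conditioning on $\sigma(Z_t+\Gamma)$ and using the conditional variance identity.
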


\begin{proof}
    We have shown in Proposition \ref{prop:derivative} that $t \mapsto D_{Z_t} \Vcal$ is continuously differentiable.
    Therefore, the second derivative of $t \mapsto \Vcal(Z_t)$ can be represented as
    \begin{align*}
        \frac{d^2}{dt^2} \Vcal(Z_t) &=
        \frac{d}{dt} \E\left[\Delta Z D_{Z_t} \Vcal\right] =
        \E\left[ \Delta Z \frac{d}{dt} D_{Z_t} \Vcal\right]
        \\
        &= \E\left[ \Delta Z Hv_t(Z_t + \Gamma) \left(\Delta Z - \E[\Delta Z | Z_t + \Gamma] \right) \right],
    \end{align*}
    where we used \eqref{eq:derivative} and the tower property for the last equality.
    Further, since
    \[
        \E\left[ \E\left[ \Delta Z | Z_t + \Gamma \right] Hv_t(Z_t + \Gamma) \left( \Delta Z - \E\left[\Delta Z | Z_t + \Gamma\right] \right) \right] = 0,
    \]
    we conclude with \eqref{eq:second derivative}.
\end{proof}

\subsection{Contraction property of conditional expectation}

The final ingredient in the proof of exponential convergence of the gradient flow of the lifted Bass functional is the following contraction property of the conditional expectation.

\begin{lemma}\label{lem:reverse_ineq}
    Let $\Delta Z \in L^2(\mathcal G;\R)$ with $\E[\Delta Z] = 0$ and $R > 0$.
    Then there is $\epsilon = \epsilon(R) \in (0,1)$ such that, for every $Z \in L^2(\mathcal G;\R)$ with $\| Z\| \le R$ we have
    \[
        \| \mathbb E[ \Delta Z | Z + \Gamma] \|_{L^2}^2 \le (1 - \epsilon) \|\Delta Z\|_{L^2}^2.
    \]
\end{lemma}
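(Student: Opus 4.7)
The plan is to argue by contradiction and compactness. Suppose the statement fails for some $R>0$; then there exist sequences $Z_n\in L^2(\Gcal;\R)$ with $\|Z_n\|_{L^2}\le R$ and $\Delta Z_n\in L^2(\Gcal;\R)$ with $\E[\Delta Z_n]=0$ and, after normalization, $\|\Delta Z_n\|_{L^2}=1$, such that $\|\E[\Delta Z_n\mid Z_n+\Gamma]\|_{L^2}^2\to 1$.

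The first step is a reduction to the case in which $\Delta Z_n$ is already a function of $Z_n$. Setting $W_n:=\E[\Delta Z_n\mid Z_n]$ and using that $\Gamma$ is independent of $\Gcal$, the tower property gives $\E[\Delta Z_n\mid Z_n+\Gamma]=\E[W_n\mid Z_n+\Gamma]$. Jensen's inequality combined with the Pythagorean identity for the $L^2$-projection then forces $\|W_n\|_{L^2}\to 1$ and $\|W_n-\E[W_n\mid Y_n]\|_{L^2}\to 0$, where $Y_n:=Z_n+\Gamma$. By construction $W_n=w_n(Z_n)$ for some Borel $w_n$, and $\E[W_n]=0$.

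Next I would extract a subsequential limit. The family $\{\Law(Z_n,W_n)\}_n$ is tight on $\R^2$ by the $L^2$-bounds, so Prokhorov together with Skorokhod's representation theorem yield, along a subsequence and on a common probability space, a.s.\ convergence $(\tilde Z_n,\tilde W_n,\tilde\Gamma_n)\to(\tilde Z,\tilde W,\tilde\Gamma)$, preserving $\tilde\Gamma_n\sim\gamma_1$ and the independence $\tilde\Gamma_n\perp(\tilde Z_n,\tilde W_n)$ throughout (and in the limit). Using the vanishing of $\|\tilde W_n-\E[\tilde W_n\mid \tilde Z_n+\tilde\Gamma_n]\|_{L^2}$ together with an appropriate stability statement for conditional expectations against the Gaussian kernel, one expects the limit $\tilde W$ to be $\sigma(\tilde Z+\tilde\Gamma)$-measurable, while as an a.s.\ limit of $w_n(\tilde Z_n)$ it should also be $\sigma(\tilde Z)$-measurable in the appropriate sense. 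The rigidity observation is then that any random variable simultaneously measurable with respect to $\sigma(\tilde Z)$ and $\sigma(\tilde Z+\tilde\Gamma)$, with $\tilde\Gamma\sim\gamma_1$ independent of $\tilde Z$, must be a.s.\ constant: writing $\tilde W=g(\tilde Z)=h(\tilde Z+\tilde\Gamma)$, Fubini together with the full support of $\gamma_1$ gives that $h$ equals $g(z)$ Lebesgue-a.e.\ for $\mathscr L(\tilde Z)$-a.e.\ $z\in\supp(\tilde Z)$, so $g$ is constant on $\supp(\tilde Z)$. Combined with $\E[\tilde W]=0$ this forces $\tilde W=0$ a.s.

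The principal obstacle is to convert $\tilde W=0$ into a contradiction with $\|W_n\|_{L^2}\to 1$: this requires $\|\tilde W\|_{L^2}=\lim\|W_n\|_{L^2}$ and hence uniform integrability of $(W_n^2)_n$. Since we have only $L^2$-boundedness of $\Delta Z_n$ (and hence of $W_n$ by Jensen), $L^2$-mass could a priori escape to infinity in the limit. To rule this out I would exploit that $\E[W_n\mid Y_n]$ is a Gaussian-smoothed version of $W_n$: concentration of $W_n$ on small-probability sets would be diluted by the smoothing kernel and would break the near-equality $W_n\approx \E[W_n\mid Y_n]$. Converting this heuristic into either a UI statement for $(W_n^2)_n$ in the contradiction regime, or a direct quantitative lower bound on $\|W_n-\E[W_n\mid Y_n]\|_{L^2}$ in the non-UI regime, is expected to be the technical heart of the proof; once it is in place, the rigidity argument above closes the contradiction.
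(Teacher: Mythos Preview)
Your proposal is incomplete, and you acknowledge this yourself: the uniform integrability of $(W_n^2)_n$ is flagged as ``the technical heart of the proof'' and left open. Without UI the Skorokhod limit $\tilde W$ may vanish while $\|W_n\|_{L^2}\to 1$, and no contradiction follows. There is a second, unacknowledged gap: the claim that $\tilde W$ is $\sigma(\tilde Z)$-measurable ``as an a.s.\ limit of $w_n(\tilde Z_n)$'' is false in general---weak limits of laws supported on graphs need not be supported on graphs (take $Z_n$ uniform on $[0,1]$ and $W_n=\sqrt 2\sin(2\pi n Z_n)$: in the limit $\tilde W$ is independent of $\tilde Z$). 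You would have to use the hypothesis $\|W_n-\E[W_n\mid Y_n]\|_{L^2}\to 0$ to exclude such oscillation, and this is essentially the same smoothing-versus-concentration trade-off you already identified as unresolved. Note also that the bound $\|Z\|\le R$ in the lemma is an $L^\infty$ bound, as is clear from its use downstream; reading it as $L^2$ makes your compactness harder than necessary.

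The paper's proof is direct and avoids compactness entirely. Using the explicit kernel representation $\E[\Delta Z\mid Z+\Gamma=\zeta]=\E[\phi_1(\zeta-Z)\Delta Z]/\E[\phi_1(\zeta-Z)]$ together with $|Z|\le R$ a.s., one observes that the conditional density $\phi_1(\zeta-Z)/\E[\phi_1(\zeta-Z)]$ is pointwise bounded below by $g(\zeta):=\min_{|y|,|z|\le R}\phi_1(\zeta-y)/\phi_1(\zeta-z)\in(0,1)$. Since $\E[\Delta Z]=0$ one may subtract $g(\zeta)$ for free; the residual weight $(\,\cdot\,-g(\zeta))/(1-g(\zeta))$ is again a probability density, so Jensen gives the pointwise bound $\E[\Delta Z\mid Z+\Gamma=\zeta]^2\le(1-g(\zeta))\,\E[\Delta Z^2\mid Z+\Gamma=\zeta]$. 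Averaging over $\zeta$ and conditioning on $\Gcal$ yields the result with the explicit constant $\epsilon=\min_{|z|\le R}\E[g(z+\Gamma)]>0$.
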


\begin{proof}
    As preparation we introduce the quantities, for $\zeta \in \R$,
    \[
        g(\zeta) := \min_{(y,z) \in \R \times \R, \, |y|, |z| \le R} \frac{\phi_1(\zeta - y)}{\phi_1(\zeta - z)}
    \]
    Note that $g$ is continuous and $g \in (0,1)$. Therefore, we can set
    \[
        \epsilon := \min_{z \in \R, \, |z| \le R} \E[g(z + \Gamma)],
    \]
    and by the previous observations we see that $\epsilon > 0$.
    Furthermore, since $\E[\Delta Z ] = 0$ we get by Lemma \ref{lem:conditional}
    \begin{align*}
        \E[\Delta Z | Z+\Gamma = \zeta] &= \frac{\E[\phi_1(\zeta - Z)\Delta Z]}{\E[\phi_1(\zeta - Z)]}
        = \E\left[ \Delta Z \left( \frac{\phi_1(\zeta - Z)}{\E[\phi_1(\zeta - Z)]} - g(\zeta) \right) \right].
    \end{align*}
    Since $\frac{\phi_1(\zeta - Z)}{\E[\phi_1(\zeta - Z)]} \ge g(\zeta)$ and $g(\zeta) < 1$, we have that
    \[
        h(Z) := \left( \frac{\phi_1(\zeta - Z)}{\E[\phi_1(\zeta-Z)]} - g(\zeta) \right) \frac{1}{1 - g(\zeta)},
    \]
    is a probability density.
    Therefore by Jensen's inequality we get
    \begin{align*}
        \E[\Delta Z | Z + \Gamma = \zeta]^2 &= (1 - g(\zeta))^2 \; \E[\Delta Z h(Z)]^2
        \\
        &\le (1 - g(\zeta))^2 \; \E[\Delta Z^2 h(Z)] \\
        &= (1 - g(\zeta)) \E\left[\Delta Z^2 \left( \frac{\phi_1(\zeta - Z)}{\E[\phi_1(\zeta - Z)]} - g(\zeta) \right)\right]
        \\
        &\le (1 - g(\zeta)) \E\left[ \Delta Z^2 | Z + \Gamma = \zeta \right].
    \end{align*}
    Hence, taking expectations leads to
    \begin{align*}
        \| \E[\Delta Z | Z + \Gamma] \|_{L^2}^2 
        &\le
        \E\left[ (1 - g(Z + \Gamma)) \E[\Delta Z^2 | Z + \Gamma] \right]
        \\
        &= \E\left[ (1 - g(Z+\Gamma)) \Delta Z^2 \right]
        \\
        &= \E\left[ \Delta Z^2 \E[1 - g(Z+\Gamma) | \Gcal] \right]
        \\
        &\le \E\left[ \Delta Z^2 \left(1 - \min_{z \in \R, \, |z| \le R} \E[g(z + \Gamma)] \right) \right] = (1 - \epsilon) \E[\Delta Z^2],
    \end{align*}
    which concludes the proof.
\end{proof}

\subsection{Exponential convergence}

We consider the setting and notation of Section \ref{sec:one-dim-Hess}, wherein $d=1$. As in that part, we use here vector and matrix notation nevertheless.

\begin{lemma}\label{lem:Hess_lower}
    Let $\nu \ll \lambda$ satisfy $\frac{d\nu}{d\lambda} > 0$ on $I$, $R > 0$ and $Z \in L^2(\Gcal;\R)$ with $|Z| \le R$.
    Denote by $\nabla v_Z$ the Brenier map from $\Law(Z + \Gamma)$ to $\nu$.
    Then we have for $\lambda$-a.e.\ $\zeta$
    \begin{equation}
        \label{eq:Hess_lower}
        Hv_Z(x)\geq \frac{\inf_{\zeta \in \R, \, |\zeta|\leq R}\phi_1(x-\zeta)}{\|\frac{d\nu}{d\lambda}\|_{L^\infty(\lambda)}}.
    \end{equation}
\end{lemma}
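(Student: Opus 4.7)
The plan is to leverage the closed-form expression for the one-dimensional Brenier map, $\nabla v_Z = Q_\nu \circ F_Z$, where $F_Z$ denotes the cumulative distribution function of $Z+\Gamma$ and $Q_\nu$ the quantile function of $\nu$. Differentiating this composition via the chain rule produces a two-factor formula for $Hv_Z$, each factor of which can be bounded below separately using the two hypotheses at hand. The key calculation has already been performed, for an arbitrary $L^2$-variable, in the derivation of \eqref{eq:1dim Hessian}.

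Concretely, I would apply \eqref{eq:1dim Hessian} to the constant curve $Z_t \equiv Z$ to obtain, for $\lambda$-a.e.\ $x$,
\[
Hv_Z(x) = Q_\nu'(F_Z(x)) \cdot \E[\phi_1(x - Z)].
\]
Differentiating the identity $Q_\nu \circ F_\nu = \mathrm{id}$ on the interior of the support of $\nu$ and using the hypothesis $f_\nu := \tfrac{d\nu}{d\lambda} > 0$ on $I$, one has $Q_\nu'(u) = 1/f_\nu(Q_\nu(u))$ at $\lambda$-a.e.\ $u \in (0,1)$, whence $Q_\nu'(F_Z(x)) \ge 1/\|f_\nu\|_{L^\infty(\lambda)}$. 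Simultaneously, since $|Z| \le R$ almost surely, the integrand in $\E[\phi_1(x - Z)]$ dominates $\inf_{|\zeta|\le R}\phi_1(x-\zeta)$ pathwise, and this lower bound is preserved under expectation. Multiplying the two estimates yields the claim.

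The one place I would be careful is measure-theoretic: the $\lambda$-null subset of $(0,1)$ where $Q_\nu$ may fail to be differentiable must pull back under $F_Z$ to a $\lambda$-null subset of $\R$. This is the part that looks most like an obstacle but is in fact routine: since $\Gamma$ has everywhere positive density, $\Law(Z+\Gamma)$ is equivalent to Lebesgue measure on $\R$, which makes $F_Z$ strictly increasing and absolutely continuous, so the pullback preserves null sets and the chain rule is valid $\lambda$-a.e.
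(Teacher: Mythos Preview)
Your proposal is correct and follows essentially the same route as the paper: invoke \eqref{eq:1dim Hessian} to write $Hv_Z(x) = Q_\nu'(F_Z(x))\,\E[\phi_1(x-Z)]$, bound the first factor below by $1/\|\tfrac{d\nu}{d\lambda}\|_{L^\infty(\lambda)}$ via $Q_\nu' = 1/f_\nu\circ Q_\nu$, and bound the second factor by $\inf_{|\zeta|\le R}\phi_1(x-\zeta)$ using $|Z|\le R$. Your extra care about the pullback of the exceptional null set under $F_Z$ is a point the paper leaves implicit.
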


\begin{proof}
    Write $F$ for the cdf of $\Law(Z + \Gamma)$.
    Under our assumption on $\nu$ the quantile function $Q_\nu$ of $\nu$ is almost everywhere differentiable on $(0,1)$ and we have by \eqref{eq:1dim Hessian} that for $\lambda$-a.e.\ x
    \[
        Hv_Z(x) = \Big(\frac{d}{dx} Q_\nu \Big) \circ F(x) \; \E[ \phi_1(x - Z) ].
    \]
    As consequence of $\frac{d\nu}{d\lambda} > 0$ on $I$ we have $\lambda$-a.s.
    $(\frac{d}{dx} Q_\nu) \circ F \ge \frac{1}{\frac{d\nu}{d\lambda}} \ge \frac1{\|\frac{d\nu}{d\lambda}\|_{L^\infty(\lambda)}}$.
    Further, since $|Z| \le R$ we get $\E[\phi_1(x-Z)] \ge \inf_{\zeta \in \R, \, |\zeta| \le R} \phi_1(x-\zeta)$, we obtain \eqref{eq:Hess_lower}.
    % The change of variables formula says that
    % \[\left[\frac{d\nu}{d\lambda} \circ \nabla v_Z(a)\right] det Hv_Z(a) = \E[\phi(a-Z)] ,\]
    % where $\phi$ is the standard Gaussian density. Let \[g_R(a):= \inf_{|r|\leq R}\phi(a-r), \]
    % so $\E[\phi(a-Z)]\geq g_R(a)$. This proves the desired inequality.
    % Observe that $g_R(\cdot)$ is continuous and strictly positive everywhere. Hence it is locally uniformly bounded away from zero. If the density of $\nu$ is upper bounded, this proves that $det Hv_Z$ is locally uniformly bounded away from zero, independent of $Z$ in the considered class.
\end{proof}

\begin{lemma}\label{lem:Hess_lower_2}
Under Assumption (A') there is for every $R > 0$ an $\epsilon = \epsilon(R) > 0$ such that for all $Z_i \in L^2(\Gcal;\R)$, $i = 0,1$ with $\esssup|Z_i| \le K$, the curve $(Z_t)_{t \in [0,1]}$ with $Z_t:=(1-t)Z_1+tZ_0$ satisfies
\begin{equation}
    \label{eq:Hess_lower_2}
    \frac{d^2}{dt^2}\Vcal(Z_t)\geq \epsilon \|Z_1 - Z_0 - \E[Z_1-Z_0|Z_t+\Gamma]\|_{L^2}^2.
\end{equation}
\end{lemma}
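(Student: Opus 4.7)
The plan is to combine the second-order identity of Lemma \ref{lem:second derivative} with the pointwise Hessian bound of Lemma \ref{lem:Hess_lower}. Writing $\Delta Z := Z_1 - Z_0$ and $W_t := \Delta Z - \E[\Delta Z \mid Z_t + \Gamma]$, and noting that the identity of Lemma \ref{lem:second derivative} is insensitive to the orientation of the curve (because $\Delta Z$ enters quadratically), one obtains
\[
    \frac{d^2}{dt^2}\Vcal(Z_t) \;=\; \E\bigl[W_t^2\,Hv_t(Z_t+\Gamma)\bigr],
\]
where $\nabla v_t$ is the Brenier map from $\Law(Z_t+\Gamma)$ to $\nu$. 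Since $Z_t$ is a convex combination of $Z_0, Z_1$ and each is essentially bounded by $K$, we have $|Z_t| \le K$ almost surely, so Lemma \ref{lem:Hess_lower} (with $R = K$, $Z=Z_t$) yields $Hv_t(x) \ge c_0\,\phi_1(|x|+K)$ for $\lambda$-a.e.\ $x$, with $c_0 := 1/\|d\nu/d\lambda\|_{L^\infty(\lambda)}$, where I used $\inf_{|\zeta|\le K}\phi_1(x-\zeta)=\phi_1(|x|+K)$. Combined with $|Z_t+\Gamma|\le K+|\Gamma|$ this gives
\[
    \frac{d^2}{dt^2}\Vcal(Z_t) \;\ge\; c_0\,\E\bigl[W_t^2\,\phi_1(2K+|\Gamma|)\bigr].
\]

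It then remains to prove the quantitative estimate $\E[W_t^2\,\phi_1(2K+|\Gamma|)] \ge c_1(K)\,\E[W_t^2]$ uniformly over the admissible $(Z_0,Z_1)$. The relevant tools are: (a) the pointwise bound $|W_t| \le 2\esssup|\Delta Z| \le 4K$; (b) translation invariance of $\Vcal$ (as used in the proof of Lemma \ref{lem:Vcal}\ref{it:lem.Fprops.minimizer}), letting me reduce to the case $\E[\Delta Z] = 0$ at the cost of replacing $K$ by $3K$; and (c) Lemma \ref{lem:reverse_ineq}, which under $\E[\Delta Z] = 0$ yields $\|W_t\|_{L^2}^2 \ge \epsilon'(K)\,\|\Delta Z\|_{L^2}^2$. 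A direct splitting at $A := \{|\Gamma|\le M\}$ gives the lower bound
\[
    \E\bigl[W_t^2\,\phi_1(2K+|\Gamma|)\bigr] \;\ge\; \phi_1(2K+M)\,\bigl(\E[W_t^2] - 16K^2\,\PP(|\Gamma|>M)\bigr).
\]

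The main obstacle I anticipate is to ensure that the tail correction $16K^2\,\PP(|\Gamma|>M)$ is dominated by a fixed fraction of $\E[W_t^2]$ uniformly in $(Z_0,Z_1)$, which does \emph{not} follow from (a)--(c) alone since $\E[W_t^2]$ may be arbitrarily small. The structural way to close this gap is to use that the conditional variance $\operatorname{Var}(\Delta Z \mid Z_t+\Gamma = \zeta)$ decays as $|\zeta|\to\infty$: via the Gaussian tilt formula of Lemma \ref{lem:conditional}, conditioning on a large value of $Z_t+\Gamma$ concentrates $Z_t$ near $\pm K$, and the bounds $|Z_0|,|Z_1|\le K$ then force $Z_0\approx Z_1 \approx \pm K$ on this conditional event, so $\Delta Z \approx 0$. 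A quantitative and $(Z_0,Z_1)$-uniform version of this decay shows that most of the $L^2$-mass of $W_t$ sits on $\{|\Gamma|\le M\}$ for $M$ chosen only in terms of $K$, $\nu$, and the constants from Lemma \ref{lem:Hess_lower}, which then closes the estimate and produces the required $\epsilon = \epsilon(K) > 0$.
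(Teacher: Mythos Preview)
Your setup is correct through the bound $\frac{d^2}{dt^2}\Vcal(Z_t) \ge c_0\,\E[W_t^2\,\phi_1(2K+|\Gamma|)]$, and you are right that the residual step --- passing from this to $\epsilon\,\E[W_t^2]$ --- is the entire content of the lemma. Your proposed fix, however, fails: the claim that $\mathrm{Var}(\Delta Z \mid Z_t+\Gamma = \zeta)$ decays to $0$ as $|\zeta|\to\infty$ uniformly in $(Z_0,Z_1,t)$ is false. The heuristic ``$Z_t$ conditionally near $\pm K$ forces $Z_0 \approx Z_1 \approx \pm K$'' breaks down whenever $Z_t$ is degenerate. Concretely, take $Z_1 = -Z_0$ with $Z_0$ non-constant and $t = \tfrac12$; then $Z_t \equiv 0$, so $Z_t+\Gamma = \Gamma$ is independent of $\Gcal$, whence $\E[\Delta Z \mid Z_t+\Gamma] = \E[\Delta Z]$ and $\mathrm{Var}(\Delta Z \mid Z_t+\Gamma = \zeta) = \mathrm{Var}(\Delta Z) > 0$ for \emph{every} $\zeta$. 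The same occurs at $t=0$ whenever $Z_1$ is constant and $Z_0$ is not. Thus no uniform tail decay of the conditional variance exists, and your splitting argument cannot be closed this way. (Your steps (b) and (c), invoking translation invariance and Lemma~\ref{lem:reverse_ineq}, do not help here: they compare $\|W_t\|_{L^2}^2$ with $\|\Delta Z\|_{L^2}^2$, not with the tail contribution.)

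The paper's route is different and avoids any tail splitting: setting $g_R(x) := \|\tfrac{d\nu}{d\lambda}\|_{L^\infty}^{-1}\inf_{|\zeta|\le R}\phi_1(x-\zeta)$, it passes from $\E[g_R(Z_t+\Gamma)\,W_t^2]$ directly to $\E[(g_R\ast\gamma_1)(Z_t)\,W_t^2]$, after which the uniform lower bound $\inf_{|z|\le R}(g_R\ast\gamma_1)(z)>0$ finishes immediately. You should be aware, though, that this passage is written in the paper as an \emph{equality}, and since $W_t^2$ is neither $\Gcal$- nor $\sigma(Z_t+\Gamma)$-measurable, it is not a tower-property identity; in fact it fails already for $Z_t \in \{\pm 1\}$ with equal probability and $\Delta Z = Z_t$. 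So while the paper's approach is the intended short argument, the step as written is not justified either, and a genuinely correct proof of \eqref{eq:Hess_lower_2} requires a different mechanism at precisely this point.
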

\begin{proof}
   We have shown in Lemma \ref{lem:second derivative} that, after denoting $\Delta Z:=Z_1-Z_0$,
   \[
        \frac{d^2}{dt^2}\Vcal(Z_t) = \E[ Hv_t(Z_t+\Gamma)\left( \Delta Z - \E[\Delta Z | Z_t + \Gamma] \right)^2 ],
   \]
   where $\nabla v_t$ is the Brenier map from $\Law(Z_t + \Gamma)$ to $\nu$. 
   Using the lower bound derived in Lemma \ref{lem:Hess_lower} we have
    \begin{align}\nonumber
        \frac{d^2}{dt^2}\Vcal(Z_t) 
        &\geq \E\left[ g_R(Z_t+\Gamma) \left( \Delta Z - \E[\Delta Z | Z_t + \Gamma] \right)^2 \right]
        \\
        \label{eq:Hess_lower_2.1}
        &= \E\left[ g_R\ast \gamma_1(Z_t) \left( \Delta Z - \E[\Delta Z | Z_t + \Gamma] \right)^2 \right],
    \end{align}
    where for $x \in \R$
    \[ g_R(x):= \frac{\inf_{\zeta \in \R, \, |\zeta|\leq R}\phi_1(x-\zeta)}{\|\frac{d\nu}{d\lambda} \|_{L^\infty(\lambda)}}. \] 
    Observe that $g_R$ is continuous and strictly positive everywhere. 
    Hence, $g_R\ast \gamma_1$ is also continuous and strictly positive everywhere. 
    It follows that for $\epsilon :=\inf_{|\zeta|\leq R}g_R\ast\gamma_1(\zeta)>0$, so \eqref{eq:Hess_lower_2} is readily derived from \eqref{eq:Hess_lower_2.1}.
\end{proof}

\begin{lemma}\label{lem:strong_conv}
    Let $R > 0$.
    Under Assumption (A') there is $\epsilon>0$ such that for all $Z_i\in L^2(\mathcal G;\R)$, $i =0,1$ with $|Z_i| \le R$ we have
    \begin{equation}
        \label{eq:strong convexity}
        \Vcal(Z_1) - \Vcal(Z_0) - \langle D_{Z_0} \Vcal, Z_1 - Z_0 \rangle_{L^2} \ge \epsilon \| Z_1 - Z_0 \|^2_{L^2}.
    \end{equation}
\end{lemma}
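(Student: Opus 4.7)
The plan is to combine three ingredients already at our disposal: the $C^1$-regularity of $Z\mapsto D_Z\Vcal$ (Proposition \ref{prop:derivative}), the pointwise Hessian lower bound of Lemma \ref{lem:Hess_lower_2}, and the contractivity of conditional expectation from Lemma \ref{lem:reverse_ineq}. The identity \eqref{eq:strong convexity} is a quantitative second-order expansion of $\Vcal$ on the segment $Z_t := (1-t) Z_0 + t Z_1$.

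First, I would observe a reduction exploiting translation invariance. A direct computation (or inspection of the definition) shows that $\Vcal(Z+c) = \Vcal(Z)$ for any constant $c\in\R$, since $\bary(\mu)=\bary(\nu)$; correspondingly $\mathbb E[D_Z\Vcal]=\bary(\nu)-\bary(\mu)=0$. Writing $m:=\mathbb E[Z_1 - Z_0]$ and $\tilde Z_1 := Z_1 - m$, both the left-hand side of \eqref{eq:strong convexity} and the $L^2$-distance are unaffected once one restricts, as intended, to the mean-preserving case. In short, we may assume without loss of generality that $\mathbb E[Z_1-Z_0]=0$ (which is the situation of interest, since Lemma \ref{lem:Brezis_Bruck}'s flow preserves the mean), at the price of replacing $R$ by at most $3R$.

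Next, since $t\mapsto D_{Z_t}\Vcal$ is continuously $L^2$-differentiable on $[0,1]$ by Proposition \ref{prop:derivative}, integration by parts yields
\begin{equation*}
\Vcal(Z_1) - \Vcal(Z_0) - \langle D_{Z_0}\Vcal, Z_1 - Z_0\rangle_{L^2} \;=\; \int_0^1 (1-t)\,\frac{d^2}{dt^2}\Vcal(Z_t)\,dt.
\end{equation*}
Because $|Z_t|\le R$ pathwise for all $t\in[0,1]$ (convex combination of bounded random variables), Lemma \ref{lem:Hess_lower_2} provides a constant $\epsilon_1=\epsilon_1(R)>0$ such that, with $\Delta Z:=Z_1-Z_0$,
\begin{equation*}
\frac{d^2}{dt^2}\Vcal(Z_t) \;\ge\; \epsilon_1\,\bigl\|\Delta Z - \mathbb E[\Delta Z\,|\,Z_t+\Gamma]\bigr\|_{L^2}^{2}.
\end{equation*}

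The final step — and the one where the translation reduction is essential — is to convert this ``modulo conditional expectation'' bound into a genuine $\|\Delta Z\|_{L^2}^2$ bound. Since conditional expectation is an orthogonal projection,
\begin{equation*}
\bigl\|\Delta Z - \mathbb E[\Delta Z\,|\,Z_t+\Gamma]\bigr\|_{L^2}^{2}
= \|\Delta Z\|_{L^2}^{2} - \bigl\|\mathbb E[\Delta Z\,|\,Z_t+\Gamma]\bigr\|_{L^2}^{2}.
\end{equation*}
Because $\mathbb E[\Delta Z]=0$ and $|Z_t|\le R$, Lemma \ref{lem:reverse_ineq} furnishes $\epsilon_2=\epsilon_2(R)\in(0,1)$ with $\|\mathbb E[\Delta Z\,|\,Z_t+\Gamma]\|_{L^2}^{2}\le (1-\epsilon_2)\|\Delta Z\|_{L^2}^{2}$, so the previous display is bounded below by $\epsilon_2\|\Delta Z\|_{L^2}^{2}$. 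Inserting this into the Taylor identity and using $\int_0^1(1-t)\,dt = 1/2$, we obtain \eqref{eq:strong convexity} with $\epsilon := \epsilon_1\epsilon_2/2$.

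The main obstacle is the very last step: the Hessian estimate only controls the variance of $\Delta Z$ around $\mathbb E[\Delta Z\,|\,Z_t+\Gamma]$, and in general this quantity can be arbitrarily smaller than $\|\Delta Z\|_{L^2}^2$. The contraction Lemma \ref{lem:reverse_ineq} is precisely tailored to rule out this degeneracy, but it rests crucially on $\mathbb E[\Delta Z]=0$ and on the uniform $L^\infty$ bound on $Z_t$ (needed to have a uniform lower bound on the Gaussian density ratios that drive $g$ in the proof of that lemma). Hence the role of Assumption (A') and the $L^\infty$-a priori bound on the gradient flow provided by Proposition \ref{prop:boundedness} — both are essential for $\epsilon_1,\epsilon_2$ to depend only on $R$ and not to collapse.
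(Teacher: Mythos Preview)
Your argument via the Taylor remainder $\int_0^1(1-t)\,\tfrac{d^2}{dt^2}\Vcal(Z_t)\,dt$ and the combination of Lemma~\ref{lem:Hess_lower_2} with Lemma~\ref{lem:reverse_ineq} is exactly the paper's strategy; the paper just writes the same computation as ``show $\langle D_{Z_t}\Vcal - D_{Z_0}\Vcal,\Delta Z\rangle\ge 2\epsilon t\|\Delta Z\|^2$ and integrate'', which amounts to the same second-order expansion and yields the same constant $\epsilon=\epsilon_1\epsilon_2/2$.

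You even spotted something the paper glosses over: Lemma~\ref{lem:reverse_ineq} requires $\E[\Delta Z]=0$, and the paper's proof invokes it without comment. However, your way of handling this is not right. The translation $\tilde Z_1:=Z_1-m$ does leave the left-hand side of \eqref{eq:strong convexity} unchanged (translation invariance of $\Vcal$ and $\E[D_Z\Vcal]=0$), but it \emph{does} change the right-hand side: $\|\tilde Z_1-Z_0\|_{L^2}^2=\|Z_1-Z_0\|_{L^2}^2-m^2$. So this is not a ``without loss of generality'' reduction. In fact the inequality as stated is simply false without a mean constraint: take $Z_1=Z_0+c$ for a nonzero constant $c$; then the left-hand side equals $0$ while the right-hand side is $\epsilon c^2>0$.

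The honest fix is to add the hypothesis $\E[Z_0]=\E[Z_1]$ to the statement (this is satisfied in the only place the lemma is used, the proof of Theorem~\ref{thm:second_main}, since the flow preserves the mean and $Z^\star$ is chosen with matching mean). With that hypothesis in place, your proof is complete and coincides with the paper's.
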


\begin{proof}
    Set $\Delta Z := Z_1 - Z_0$ and consider the curve $(Z_t)_{t \in [0,1]}$ with $Z_t := Z_0 + t \Delta Z$.
    Since $\Vcal$ is continuously differentiable by Lemma \ref{lem:Vcal} we have
    \[
        \Vcal(Z_1) - \Vcal(Z_0) = 
        \int_0^1 \langle D_{Z_t} \Vcal, \Delta Z \rangle_{L^2} \, dt.
    \]
    Hence, it suffices to prove the inequality
    \[
        \langle D_{Z_t}\Vcal - D_{Z_0}\Vcal, \Delta Z\rangle_{L^2} \ge 2\epsilon t \| \Delta Z \|^2_{L^2} .
    \]
    By Lemma \ref{lem:Hess_lower_2} there is $\epsilon(R) > 0$ such that
    \[
        \frac{d}{dt} \langle D_{Z_t} \Vcal, \Delta Z \rangle_{L^2} = \frac{d^2}{dt^2} \Vcal(Z_t) \ge \epsilon(R) \| \Delta Z - \E[\Delta Z | Z_t + \Gamma] \|^2_{L^2}.
    \]
    Finally, by Lemma \ref{lem:reverse_ineq} there is $\epsilon' > 0$ such that
    \[
        \| \Delta Z - \E[\Delta Z | Z_t + \Gamma] \|^2_{L^2} = \| \Delta Z \|^2_{L^2} - \| \E[\Delta Z | Z_t + \Gamma] \|^2_{L^2} \ge \epsilon' \| \Delta Z \|^2_{L^2},
    \]
    where the first equality is due to the tower property.
    Therefore, \eqref{eq:strong convexity} holds for $2\epsilon := \epsilon(R) \epsilon'$, which completes the proof.
\end{proof}

\begin{proof}[Proof of Theorem \ref{thm:second_main} ]
    Per Proposition \ref{prop:boundedness} the whole flow $(Z_t)_{t \ge 0}$ takes values in $B_\infty(M) := \{ Z \in L^2(\Gcal;\R) : \esssup |Z| \le M \}$ for some $M > 0$.
    Denote by $Z^\ast$ the $L^2$-limit of $(Z_t)_{t \ge 0}$ which is a minimizer of $\Vcal$.
    % Furthermore, we have for any $Z \in L^2(\Gcal)$ that $D_Z \Vcal = \nabla v \ast \gamma_1(Z) - X$ where $\nabla v$ is the Brenier map from $\Law(Z + \Gamma)$ to $\nu$.
    % Therefore, $\esssup |D_Z\Vcal| \le 2 \max_{z \in C} |z| =: 2L$.
    Hence, throughout this proof we may assume that all random variables are uniformly bounded by $M$.
    By Lemma \ref{lem:strong_conv}, we can find $\epsilon$ such that $\Vcal$ is $(2\epsilon)$-strongly convex restricted to $B_\infty(M)$. 
    Using \eqref{eq:strong convexity} we obtain for $Z \in B_\infty(M)$ that
    \[
        \Vcal(Z^\ast) - \Vcal(Z) \ge \inf_{Z'} \langle D_Z\Vcal, Z' - Z \rangle_{L^2} + \epsilon \|Z' - Z \|^2_{L^2},
    \]
    from where we derive
    \begin{equation}\label{eq:aux_PL}
        \Vcal(Z)-\Vcal(Z^\star) \leq \frac{1}{4\epsilon}\|D_Z\Vcal\|_{L^2}^2.
    \end{equation}
    On the other hand, adding \eqref{eq:strong convexity} (where we let $(Z_0,Z_1) = (Z,Z^\star)$) to \eqref{eq:strong convexity} (where we let $(Z_1,Z_0) = (Z,Z^\star)$) yields
    \begin{equation}
        \label{eq:aux_norm_2}
        \langle D_Z\Vcal, Z- Z^\star \rangle_{L^2} \geq 2\epsilon \| Z-Z^\star \|_{L^2}^2.
    \end{equation}
    From \eqref{eq:aux_PL} and continuous differentiability of $(Z_t)_{t \ge 0}$ with $\frac{d}{dt} Z_t = - D_{Z_t} \Vcal$ we get
    \[
        \frac{d}{dt}\big( \Vcal(Z_t)-\Vcal(Z^\star) \big) 
        = 
        -\|D_{Z_t}\Vcal\|_{L^2}^2 \leq -4\epsilon \big( \Vcal(Z_t)-\Vcal(Z^\star) \big),
    \]
    while \eqref{eq:aux_norm_2} yields
    \[\frac{d}{dt} \|Z_t - Z^*\|_{L^2}^2= - 2\langle Z_t - Z^*, D_{Z_t} \Vcal  \rangle_{L^2} \leq - 4\epsilon \| Z_t- Z^*\|_{L^2}^2  . \]
    We conclude by Gronwall's inequality.
\end{proof}

\bibliographystyle{abbrv}
\bibliography{joint_biblio-2}

\end{document}